\documentclass[11pt]{article}

\usepackage[margin=1.1in]{geometry}
\usepackage{amsmath,amssymb,amsthm}
\usepackage{xcolor}
\usepackage[colorlinks=true,linkcolor=blue,citecolor=blue,urlcolor=blue]{hyperref}

\newtheorem{theorem}{Theorem}
\newtheorem{lemma}[theorem]{Lemma}

\newtheorem{corollary}[theorem]{Corollary}

\theoremstyle{remark}
\newtheorem{remark}[theorem]{Remark}

\newcommand{\fas}{\mathrm{fas}}
\newcommand{\fasw}{\mathrm{fas}_w}
\newcommand{\fwd}{\mathrm{fwd}}
\newcommand{\bck}{\mathrm{back}}
\newcommand{\E}{\mathbb{E}}
\newcommand{\R}{\mathbb{R}}
\newcommand{\Prob}{\mathbb{P}}
\newcommand{\norm}[1]{\left\lVert #1 \right\rVert}

\newcommand{\ind}[1]{\mathbf{1}\!\left[#1\right]}

\title{The optimal constant for minimum weight feedback arc sets\\ in oriented graphs}
\author{Yacong Zhou\thanks{Shenzhen Institutes of Advanced Technology, Chinese Academy of Sciences. Emails: {\tt yacong.zhou96@gmail.com}}}
\date{\today}

\begin{document}
	\maketitle
	
	\begin{abstract}
		Let $D$ be an oriented graph (a digraph with no directed $2$-cycles) with
		maximum degree $\Delta\ge 1$, equipped with nonnegative arc weights of total
		weight $w(D)$, and let $\fasw(D)$ denote the minimum weight of a feedback
		arc set of $D$. Alon (2002) proved
		$\fasw(D)\le\bigl(\tfrac12-\tfrac{1}{16\sqrt{2\Delta}}\bigr)w(D)$. We
		determine the optimal constant:
		\[
		\fasw(D)\;\le\;\Bigl(\frac12-\frac{\sqrt2}{6\sqrt{\Delta}}\Bigr)\,w(D). 
		\]
		In fact, we show a stronger result: 
		$\fasw(D)\le\tfrac12w(D)-\tfrac{\sqrt2}{12}\sum_vw_2(v)$, where $w_2(v)$ is
		the $\ell_2$-norm of the weights of the arcs incident with $v$. Both bounds are attained by the unit-weight directed triangle, so the constant $\frac{\sqrt2}{6}$ is best possible (already among unweighted oriented graphs). The proof combines the vertex-peeling scheme of Berger and Shor with a continuous random-ordering analysis: realizing the random order by independent uniform labels renders the expected local imbalance at each vertex \emph{exactly} an integrated Khintchine-type functional, and the theorem reduces to the sharp evaluation
		\[
		\inf_{\|a\|_2=1}\int_0^1\E\,\Bigl|\sum\nolimits_ja_jB_j(q)\Bigr|\,dq
		\;=\;\frac{\sqrt2}{6},
		\qquad B_j(q)\ \text{i.i.d.\ Bernoulli}(q),
		\] 
		which we prove via Fourier analysis. The proof also yields a randomized, near-linear-time algorithm attaining the bounds in expectation.
	\end{abstract}
	
	\section{Introduction}
	
	Let $D=(V,A)$ be a digraph. A set $F\subseteq A$ is a \emph{feedback arc set}
	(FAS) if $D-F$ is acyclic. For a weight function $w\colon A\to\R_{\ge0}$ we
	write $w(D)=\sum_{a\in A}w(a)$ and let $\fasw(D)$ denote the minimum weight of
	an FAS of $D$; for unweighted digraphs the minimum size of an FAS is denoted
	$\fas(D)$. An \emph{oriented graph} (or \emph{orgraph}) is a digraph with no
	loops and no directed $2$-cycles. The \emph{degree} of a vertex $v$ is
	$d(v)=d^+(v)+d^-(v)$, and $\Delta(D)=\max_{v\in V}d(v)$; note that $\Delta$ is
	determined by the digraph alone and does not depend on the weights.
	
	Upper bounds on minimum feedback arc sets are usually studied for orgraphs,
	since the general (weighted) digraph problem reduces to the orgraph case by
	cancelling directed $2$-cycles; see, e.g., \cite{GNYZ25}. The restriction is
	also necessary for bounds of the kind considered here: a single directed
	$2$-cycle with both arcs of weight $1$ satisfies $\fasw(D)=w(D)/2$ exactly.
	
	For every ordering $\sigma=v_1v_2\cdots v_n$ of $V$, the set of
	\emph{backward arcs} (arcs $v_jv_i$ with $j>i$) is an FAS; conversely, if $F$
	is an FAS then the backward arcs of any acyclic ordering of $D-F$ form a
	subset of $F$. Hence
	\begin{equation}\label{eq:ordering}
		\fasw(D)\;=\;\min_{\sigma}\,w\bigl(\bck(\sigma)\bigr)
		\;=\;\frac{w(D)-\max_\sigma\bigl(w(\fwd(\sigma))-w(\bck(\sigma))\bigr)}{2},
	\end{equation}
	where $\fwd(\sigma)$ denotes the set of forward arcs of $\sigma$. We refer to
	$w(\fwd(\sigma))-w(\bck(\sigma))$ as the \emph{surplus} of $\sigma$.
	
	Berger and Shor \cite{BS90,BS97} proved that every unweighted orgraph $D$
	with $m$ arcs satisfies $\fas(D)\le\bigl(\tfrac12-\Omega(1/\sqrt\Delta)\bigr)m$,
	and Alon \cite{Alo02} proved the weighted bound
	\begin{equation}\label{eq:alon}
		\fasw(D)\;\le\;\Bigl(\frac12-\frac{1}{16\sqrt{2\Delta}}\Bigr)\,w(D)
	\end{equation}
	for every weighted orgraph $D$. By results of Jung \cite{Jun70} and Spencer
	\cite{Spe71} (see also de la Vega \cite{dlV83}), the order of magnitude
	$\Theta(1/\sqrt{\Delta})$ in \eqref{eq:alon} is best possible, so the
	remaining question is the optimal constant. Formally, define
	\begin{equation}\label{eq:cstar}
		c^*\;=\;\inf_{D}\ \sqrt{\Delta(D)}\,\Bigl(\frac12-\frac{\fasw(D)}{w(D)}\Bigr),
	\end{equation}
	the infimum ranging over all weighted orgraphs with $\Delta(D)\ge1$ and
	$w(D)>0$, and thus \eqref{eq:alon} states $c^*\ge\frac{1}{16\sqrt2}$.
	
	Our main result determines the optimal constant exactly. For $v\in V$ let
	\[
	w(v)\;=\;\sum_{a\ni v}w(a),
	\qquad
	w_2(v)\;=\;\Bigl(\sum_{a\ni v}w(a)^2\Bigr)^{1/2},
	\]
	the sums ranging over the arcs incident with $v$, so that
	$\sum_{v\in V}w(v)=2w(D)$ and, by the Cauchy--Schwarz inequality,
	$w_2(v)\ge w(v)/\sqrt{d(v)}$. 
	
	\begin{theorem}\label{thm:main}
		Every weighted orgraph $D$ with $\Delta=\Delta(D)\ge1$ admits an ordering
		$\sigma$ of its vertices with
		\[
		w(\fwd(\sigma))-w(\bck(\sigma))
		\;\ge\;\frac{\sqrt2}{6}\sum_{v\in V}w_2(v)
		\;\ge\;\frac{\sqrt2\,w(D)}{3\sqrt{\Delta}} .
		\]
		Consequently,
		\[
		\fasw(D)\;\le\;\frac{w(D)}{2}-\frac{\sqrt2}{12}\sum_{v\in V}w_2(v)
		\;\le\;\Bigl(\frac12-\frac{\sqrt2}{6\sqrt{\Delta}}\Bigr)\,w(D).
		\]
		All four inequalities hold with equality for the unit-weight directed triangle and therefore $c^*=\frac{\sqrt2}{6}$.
	\end{theorem}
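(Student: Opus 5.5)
The plan is to realize a random Berger--Shor peeling order through independent uniform labels, show that its expected surplus is exactly a sum of integrated Khintchine-type functionals, and bound each one by a sharp inequality. I expect that sharp inequality to be the main obstacle.

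\textbf{The peeling scheme.} Give every vertex $v$ an independent label $x_v$, uniform on $[0,1]$, and process the vertices in increasing order of label. The output ordering is built from both ends inward. When $v$ is processed, look at the arcs joining $v$ to still unprocessed vertices, and let $s(v)$ be the total weight of those arcs leaving $v$ minus the total weight of those entering $v$. If $s(v)\ge0$, put $v$ at the leftmost free position; otherwise put it at the rightmost free position. Every arc is decided exactly when its first endpoint is processed, because every later vertex ends up between the two ends already filled. The arcs decided at $v$ therefore contribute exactly $|s(v)|$ to the surplus, so
\[
w(\fwd(\sigma))-w(\bck(\sigma))=\sum_v |s(v)|.
\]

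\textbf{Reduction to a Khintchine-type functional.} Write the arcs at $v$ as $a_j=\pm w(\text{arc})$, with the sign $+$ for out-arcs, going to neighbours $u_j$. Then $s(v)=\sum_j a_j\ind{x_{u_j}>x_v}$. Condition on $x_v=1-q$, where $q$ is again uniform on $[0,1]$. Since the digraph is oriented, the neighbours $u_j$ are distinct, so the indicators are i.i.d.\ Bernoulli$(q)$. This gives exactly
\[
\E|s(v)|=\int_0^1\E\Bigl|\sum_j a_jB_j(q)\Bigr|\,dq .
\]
Everything then reduces to the following lemma, for real coefficient vectors $a$ of arbitrary signs:
\[
\int_0^1\E\Bigl|\sum_j a_jB_j(q)\Bigr|\,dq\ \ge\ \tfrac{\sqrt2}{6}\,\|a\|_2 .
\]
Assuming the lemma, the expectation gives an ordering with surplus at least $\frac{\sqrt2}{6}\sum_v w_2(v)$. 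The second inequality follows from $w_2(v)\ge w(v)/\sqrt{d(v)}\ge w(v)/\sqrt\Delta$ together with $\sum_v w(v)=2w(D)$. The two $\fasw$ bounds then follow from \eqref{eq:ordering}.

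\textbf{Equality for the directed triangle.} Here $\Delta=2$, $w(D)=3$, $w_2(v)=\sqrt2$ and $\sum_v w_2(v)=3\sqrt2$. The best surplus is $2-1=1=\frac{\sqrt2}{6}\cdot3\sqrt2=\frac{\sqrt2\cdot3}{3\sqrt2}$, and $\fasw=1=\frac32-\frac12$. So all four inequalities are tight, and by \eqref{eq:cstar} $c^*\le\sqrt2(\frac12-\frac13)=\frac{\sqrt2}{6}$, matching the lower bound.

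\textbf{Proving the lemma.} This step is the main obstacle. The extremizer should be $a=(1,-1)/\sqrt2$: then $\E|X|=2q(1-q)/\sqrt2$, and integrating over $q$ gives $\frac{1}{3\sqrt2}=\frac{\sqrt2}{6}$.

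I would first try the Fourier representation
\[
\E|X|=\frac1\pi\int_{\R}\frac{1-\mathrm{Re}\,\varphi_q(t)}{t^2}\,dt,
\qquad
\varphi_q(t)=\prod_j\bigl(1-q+qe^{ia_jt}\bigr).
\]
For fixed $t$ I would bound $\int_0^1\mathrm{Re}\,\varphi_q(t)\,dq$ from above. The natural route is to use $|1-q+qe^{i\theta}|^2=1-2q(1-q)(1-\cos\theta)$ and $\mathrm{Re}\,\varphi\le|\varphi|$, which gives
\[
|\varphi_q(t)|\le\exp\Bigl(-q(1-q)\sum_j(1-\cos a_jt)\Bigr),
\]
and then to compare with the two-atom case.

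A fallback is to handle separately the regime where one coefficient dominates (reduce to one or two atoms and compute directly) and the spread-out regime. In the spread-out regime a Gaussian or Berry--Esseen comparison gives roughly $\E|X|\approx\sqrt{2q(1-q)/\pi}\,\|a\|$, which after integration is well above $\frac{\sqrt2}{6}$. The delicate part is closing the gap between these two regimes sharply, and I expect the exact Fourier computation to be needed there.
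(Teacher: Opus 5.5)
Your reduction is correct and matches the paper's. The ends-inward peeling gives surplus exactly $\sum_v|s(v)|$; it produces the same ordering as the two-block construction of Lemma~\ref{lem:peel}. Conditioning on the label of $v$ turns $\E|s(v)|$ into the integrated Bernoulli functional, and your directed-triangle computation is right.

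The gap is that the entire content of the theorem, the sharp inequality $\int_0^1\E|\sum_ja_jB_j(q)|\,dq\ge\frac{\sqrt2}{6}\|a\|_2$, is left unproved, and the Fourier route you sketch cannot give it. First, the relaxation $|1-q+qe^{i\theta}|\le e^{-q(1-q)(1-\cos\theta)}$ is strict whenever $q(1-q)(1-\cos\theta)>0$. For the extremizer $a=(1,-1)/\sqrt2$, however, the characteristic function is real and nonnegative, so the Fourier formula with $|\varphi_q|$ is already exact there. Any bound obtained after exponentiating is therefore strictly below $\frac{\sqrt2}{6}$ at the extremizer, and no later ``comparison with the two-atom case'' can recover the constant. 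Second, even the unrelaxed phase-blind bound $\frac2\pi\int_0^\infty(1-|\varphi_q(t)|)\,t^{-2}\,dt$ holds for every shift $X-t_0$. It therefore discards the drift $\E X=q\sum_ja_j$, which is exactly what keeps vectors with a heavy coordinate away from the minimum. For $a=(1)$ it gives $1/\pi$ at $q=\frac12$ instead of $\frac12$, and integrates over $q$ to roughly $0.196<\frac{\sqrt2}{6}$.

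This is why the paper uses the Fourier bound only for non-dominated vectors, $\max_j|a_j|\le\frac1{\sqrt2}$. Even there it needs a sharp external ingredient, not a Gaussian comparison. Weighted AM--GM splits $\prod_j|1-q+qe^{ia_jt}|$ into single-coordinate terms $a_j^2F_q(a_j^{-2})$, and the Havrilla--Tkocz inequality $\mathcal F_\rho(\sigma)\ge\mathcal F_\rho(1)$ yields $F_q(s)\ge F_q(2)=\sqrt2\,q(1-q)$ for $s\ge2$. Berry--Esseen cannot do this job, since the non-dominated class contains the extremizer itself and other vectors with very few coordinates.

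For dominated vectors, your fallback of reducing to one or two atoms fails: the remaining mass $1-\alpha^2$ (up to $\frac12$) can be spread over arbitrarily many coordinates of both signs. The paper writes $a=(\alpha,p_1,\dots,p_r,-c_1,\dots,-c_s)$ and conditions on the dominant Bernoulli variable. This gives the exact identity $J(a)=\frac\alpha2-\frac m6+2(U_0+U_1)$ with $m=\sum_jc_j-\sum_ip_i$, which retains the drift. Jensen-type bounds on $U_0,U_1$ then dispose of the one-signed case and of $\alpha\ge0.7241\ldots$. The remaining window is closed by a fluctuation floor and a ledger inequality in a three-case analysis. Some argument of this kind, one that keeps the mean shift rather than discarding it, is the missing idea.
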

	
	The directed triangle with unit weights has $\Delta=2$, $w=3$ and
	$\fasw=1$, whence $c^*\le\sqrt2\,(\tfrac12-\tfrac13)=\tfrac{\sqrt2}{6}$;
	the content of Theorem~\ref{thm:main} is the matching lower bound
	\begin{equation}\label{eq:window}
		c^*\;=\;\frac{\sqrt2}{6}\;=\;0.2357\ldots
	\end{equation}
	
	The proof has two ingredients. The first is the vertex-peeling scheme of
	Berger and Shor \cite{BS90,BS97}, in the form of Lemma~\ref{lem:peel}
	below: for \emph{any} processing order of the vertices, the local
	imbalances between out-weight and in-weight towards later vertices can be
	accumulated, without interference, into the surplus of a single ordering. Random orderings drive this scheme already in
	\cite{BS90,BS97}. In the rank-based analyses (see, e.g.,
	\cite[Appendix~A]{FHM24} for a recent presentation), the local
	imbalance at a vertex is a sum of \emph{negatively correlated}
	indicators --- a sample without replacement --- and is bounded from
	below in $\Omega$-form, or with explicit but lossy constants. We
	realize the random order by independent continuous labels instead:
	conditionally on the label of a vertex, its imbalance is a weighted
	sum of \emph{independent} Bernoulli variables, and its expectation
	is \emph{identically} a Khintchine-type functional of weighted
	Bernoulli sums, integrated over the Bernoulli parameter, with
	no loss. The second --- and main --- ingredient is the sharp lower bound $\bar\kappa=\frac{\sqrt2}{6}$ for this integrated functional (Theorem~\ref{thm:kbarmain}), whose extremal vectors $(1/\sqrt{2},-1/\sqrt{2})$ are realized at every vertex of the directed triangle.
	
	The paper is organized as follows. Section~\ref{sec:peel} proves the
	peeling lemma (Lemma~\ref{lem:peel}) and derives Theorem~\ref{thm:main}
	from the value $\bar\kappa=\frac{\sqrt2}{6}$. The remainder of the paper
	establishes $\bar\kappa=\frac{\sqrt2}{6}$, proceeding according to the
	shape of the unit vector $a$. Section~\ref{sec:kappa} sets up a Fourier
	framework (Lemmas~\ref{lem:known} and~\ref{lem:fourier}) and uses it to
	settle all vectors without a dominant coordinate, that is, with
	$\max_j|a_j|\le\frac1{\sqrt2}$ (Corollary~\ref{prop:nondom}). For the
	remaining \emph{dominated} vectors it introduces a standard form and,
	conditioning on the dominant coordinate, proves an exact formula for the
	integrated functional (Lemma~\ref{lem:mixedid}). This settles the case in which every other coordinate is negative
	(Theorem~\ref{thm:onesigned}) and yields an unconditional budget bound (Corollary~\ref{cor:floor}). Section~\ref{sec:residual} handles the
	\emph{residual} vectors surviving these reductions: two fluctuation
	tools --- a floor lemma (Lemma~\ref{lem:ufloor}) and a ledger
	(Lemma~\ref{lem:ledger}) --- are combined with a drift estimate into a
	three-case analysis that completes the proof of
	Theorem~\ref{thm:kbarmain}. Section~\ref{sec:conclusion} concludes with
	the pointwise version of the problem, which we leave open.
	
	\section{The ordering scheme}\label{sec:peel}
	
	Throughout, $D=(V,A,w)$ is a weighted orgraph with $n=|V|$ vertices. The following lemma repackages, in weighted form and as a statement about orderings, the vertex-processing algorithm that Berger and Shor \cite{BS90,BS97} introduced for unweighted oriented graphs.
	
	\begin{lemma}\label{lem:peel}
		Let $\pi=v_1v_2\cdots v_n$ be an arbitrary ordering of $V$. For
		$i\in\{1,\dots,n\}$ let $O_i$ (respectively $I_i$) denote the set of arcs from
		$v_i$ to (respectively into $v_i$ from) $\{v_{i+1},\dots,v_n\}$. Then there
		exists an ordering $\sigma$ of $V$ with
		\[
		w(\fwd(\sigma))-w(\bck(\sigma))\;\ge\;\sum_{i=1}^{n}\bigl|w(O_i)-w(I_i)\bigr| .
		\]
	\end{lemma}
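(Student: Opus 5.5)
We build $\sigma$ by processing the vertices in the order $\pi$ and placing each one at an end of a double-ended sequence. Start with an empty sequence $L\,R$, consisting of a left part $L$ (growing rightwards) and a right part $R$ (growing leftwards), with the final ordering $\sigma = L$ followed by $R$. When processing $v_i$ (for $i=1,\dots,n$), if $w(O_i)\ge w(I_i)$ we append $v_i$ to the right end of $L$; otherwise we prepend $v_i$ to the left end of $R$. Thus at the end every vertex processed so far lies either at the extreme right of $L$ or the extreme left of $R$ among placed vertices, and all vertices processed later are placed strictly between $L$'s current vertices and $R$'s current vertices in $\sigma$.

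The key observation is that every arc of $D$ is assigned to exactly one step: an arc between $v_i$ and $v_j$ with $i<j$ is in $O_i\cup I_i$ and is not in $O_j\cup I_j$ (the latter only count arcs towards later vertices). We claim its direction in $\sigma$ is determined by $v_i$'s placement alone. Indeed, if $v_i$ is appended to $L$, then $v_j$ (placed later, either at the right end of $L$ or the left end of $R$) ends up to the right of $v_i$ in $\sigma$. So arcs of $O_i$ are forward and arcs of $I_i$ are backward, contributing $w(O_i)-w(I_i)=|w(O_i)-w(I_i)|$ to the surplus. Symmetrically, if $v_i$ is prepended to $R$, every later vertex ends up to its left, so arcs of $I_i$ are forward and arcs of $O_i$ backward, contributing $w(I_i)-w(O_i)=|w(O_i)-w(I_i)|$.

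Summing over $i$ and using that $\{O_i\cup I_i\}$ partitions $A$ (each arc counted once, at its earlier endpoint in $\pi$; orgraph-ness is not even needed here), the surplus of $\sigma$ equals exactly $\sum_i |w(O_i)-w(I_i)|$, which gives the lemma with equality. The only point needing care is the claim that later vertices always land between $L$ and $R$, that is, to the right of everything in $L$ and to the left of everything in $R$. This follows by a trivial induction on the construction, so there is no real obstacle.
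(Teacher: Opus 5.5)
Your proposal is correct and is the paper's construction in dynamic form: appending to $L$ and prepending to $R$ produces exactly the paper's ordering (the vertices with $w(O_i)\ge w(I_i)$ in increasing index, followed by the rest in decreasing index), and your placement argument is the paper's four-case verification. The only difference is that you also check that the arcs of the lighter set at each step are backward, which turns the paper's inequality into an equality; this is correct and does not change the argument.
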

	
	\begin{proof}
		By the definitions of $O_i$s and $I_i$s,
		$\{O_i\cup I_i\}_{i=1}^{n}$ is a partition of $A$. For each $i$, let
		$K_i\in\{O_i,I_i\}$ be a set of larger weight (if $w(O_i)=w(I_i)$ then pick $K_i$ arbitrarily from them) and
		let $\overline{K}_i$ be the other set.
		
		Define $\sigma$ as follows: first the block $F=\{v_i:K_i=O_i\}$, ordered by
		increasing index $i$, followed by the block $L=\{v_i:K_i=I_i\}$, ordered by
		\emph{decreasing} index $i$.
		
		We claim that every arc in $K=\bigcup_i K_i$ is forward with respect to $\sigma$. Let $e\in K_i$ and suppose first that $e=v_iv_j\in O_i$ with $i<j$, so $v_i\in F$.  If $v_j\in L$, then this clearly holds as the whole block $L$ is after $F$. If $v_j\in F$, then $v_j$ is ordered after $v_i$ within $F$ as the block $F$ is ordered by increasing index and $i<j$. Suppose now
		that $e=v_jv_i\in I_i$ with $j>i$, so $v_i\in L$. If $v_j\in F$, then this clearly holds as the whole block $F$ is before $L$. If $v_j\in L$, then $v_j$ is ordered before $v_i$ within
		$L$ because $L$ is ordered by decreasing index and $j>i$. In all four cases $e$ is forward, proving the claim.
		
		Consequently, as all weights are nonnegative,
		\[
		w(\fwd(\sigma))\;\ge\;\sum_i w(K_i),
		\qquad
		w(\bck(\sigma))\;\le\;w(D)-\sum_i w(K_i)\;=\;\sum_i w(\overline{K}_i),
		\]
		and therefore
		$w(\fwd(\sigma))-w(\bck(\sigma))\ge\sum_i\bigl(w(K_i)-w(\overline{K}_i)\bigr)
		=\sum_i|w(O_i)-w(I_i)|$.
	\end{proof}
	
	We now prove our main result assuming
	$\bar\kappa=\frac{\sqrt2}{6}$, where $\bar\kappa$ is the integrated Khintchine-type functional defined in \eqref{eq:kbar} below. The rest of the paper is dedicated to establishing this value. As discussed in the introduction, the novel step is how the random processing order is
	analyzed. Realizing it by independent continuous labels turns the key estimate into an \emph{identity}: the expected imbalance at a vertex $v$ equals $w_2(v)$ times the integrated functional, with no loss. A rank-based realization cannot produce such an identity, because the
	indicators involved are negatively correlated. It is this
	exactness that makes the optimal constant accessible.
	
	\begin{proof}[Proof of Theorem~\ref{thm:main}]
		Let $(U_v)_{v\in V}$ be independent Uniform $[0,1]$ random variables (almost surely they are pairwise distinct), and we let $\pi$ be the ordering of $V$ by increasing $U$-value. For every $v\in V$, let
		\[
		X_v\;=\;\sum_{u\sim v}s_{vu}\,w_{vu}\,\ind{U_u>U_v},
		\]
		where the sum is over the neighbours $u$ of $v$, $w_{vu}$ is the weight of the (unique) arc between $u$ and $v$, and $s_{vu}=+1$ if that arc leaves $v$
		and $s_{vu}=-1$ if it enters $v$. With the notation of
		Lemma~\ref{lem:peel} applied to $\pi$, we have $X_v=w(O_v)-w(I_v)$, and hence
		Lemma~\ref{lem:peel} produces, for every $U$, an
		ordering with surplus at least $\sum_{v\in V}|X_v|$.
		
		Fix $v$ and condition on $U_v=t$. Since $D$ has no directed $2$-cycles, each
		neighbour $u$ of $v$ carries exactly one arc to or from $v$ and hence
		contributes exactly one term to $X_v$, with a fixed sign. In addition, since the
		variables $(U_u)_{u\sim v}$ are independent of each other and of $U_v$, the
		indicators $\bigl(\ind{U_u>t}\bigr)_{u\sim v}$ are independent
		Bernoulli($1-t$) random variables. Hence, let
		$a_v$ be the vector $\bigl(s_{vu}w_{vu}\bigr)_{u\sim v}$ indexed by the neighbours of $v$, whose $\ell_2$-norm is $w_2(v)$ (if $w_2(v)=0$ the bound below is trivial, so assume $w_2(v)>0$). Substituting $q=1-t$ and
		integrating, we have
		\[
		\E\,|X_v|
		\;=\;\int_0^1\E\bigl[\,|X_v|\;\big|\;U_v=t\,\bigr]\,dt
		\;=\;w_2(v)\int_0^1\E\Bigl|\sum_{u\sim v}\tfrac{(a_v)_u}{w_2(v)}\,
		B_u(q)\Bigr|\,dq
		\;\ge\;\bar\kappa\,w_2(v)\;=\;\frac{\sqrt2}{6}\,w_2(v),
		\]
		with $B_u(q)$ i.i.d.\ Bernoulli($q$), by definition \eqref{eq:kbar} of
		$\bar\kappa$ and $\bar\kappa=\frac{\sqrt{2}}{6}$ (proved in Theorem~\ref{thm:kbarmain}). Summing over $v$ and
		using linearity of expectation, there exists a realization of $U$ with
		$\sum_v|X_v|\ge\frac{\sqrt2}{6}\sum_v w_2(v)$, and
		Lemma~\ref{lem:peel} converts it into an ordering $\sigma$ with surplus at
		least this value. This proves the first inequality of the theorem. The second follows from $w_2(v)\ge w(v)/\sqrt{d(v)}\ge w(v)/\sqrt\Delta$ (where we applied Cauchy–Schwarz inequality) and $\sum_v w(v)=2w(D)$, and the bounds on $\fasw(D)$ follow via
		\eqref{eq:ordering}. For the unit-weight directed triangle,
		$\sum_vw_2(v)=3\sqrt2$, $\Delta=2$, $w(D)=3$ and $\fasw=1$, so all four
		inequalities are equalities there. In particular
		$c^*\le\sqrt2\,(\tfrac12-\tfrac13)=\frac{\sqrt2}{6}$, while the bound just
		proven gives $c^*\ge\frac{\sqrt2}{6}$ and therefore $c^*=\frac{\sqrt2}{6}$.
	\end{proof}
	
	\begin{remark}[Algorithmic aspects]\label{rem:algo}
		The proof is effective: sampling the labels $(U_v)_{v\in V}$,
		sorting them, and running the two-block construction of
		Lemma~\ref{lem:peel} takes $O(a(D)+n\log n)$ time, where $a(D)$ is
		the number of arcs, and outputs an ordering meeting the bounds of
		Theorem~\ref{thm:main} in expectation. We do not pursue high-probability guarantees or derandomization here.
	\end{remark}
	
	\section{The integrated Bernoulli--Khintchine constant}\label{sec:kappa}
	
	The engine of Theorem~\ref{thm:main} is a sharp lower bound for the
	integrated first absolute moment of weighted Bernoulli sums. Throughout
	this section and the next, $a\in\R^k$ with $\|a\|_2=1$,
	$B_1(q),\dots,B_k(q)$ are i.i.d.\ Bernoulli($q$),
	$X=X(q)=\sum_{i=1}^ka_iB_i(q)$, $x^+:=\max(x,0)$ denotes the
	positive part, and
	\begin{equation}\label{eq:kbar}
		\bar\kappa\;:=\;\inf_{\norm{a}_2=1}\ \int_0^1
		\E\Bigl|\sum_ja_jB_j(q)\Bigr|\,dq .
	\end{equation}
	Note that when $a=(\frac{1}{\sqrt{2}},-\frac{1}{\sqrt{2}})$, $\int_0^1\E\Bigl|\sum_ja_jB_j(q)\Bigr|\,dq=\frac{\sqrt{2}}{6}$ and therefore 
	
	\[	\bar\kappa\;\leq\; \frac{\sqrt{2}}{6}. \]
	Thus, from now on, we aim to prove $\bar\kappa\ge\frac{\sqrt2}{6}$, splitting the argument according to the shape of the unit vector $a$.
	
	We begin with the characteristic function of $X$. Recall that the characteristic function of a real random variable $Y$ is $\phi_Y(u):=\E e^{iuY}$ ($u\in\R$) (for background on characteristic functions and their use, see, e.g., \cite[Section~3.3]{Dur19}), that $\phi_{cY}(u)=\phi_Y(cu)$ for $c\in\R$ and
	$\phi_Y(-u)=\overline{\phi_Y(u)}$, and that for \emph{independent} $Y$
	and $Z$,
	\begin{equation}\label{eq:mult}
		\phi_{Y+Z}(u)\;=\;\phi_Y(u)\,\phi_Z(u).
	\end{equation}
	For a Bernoulli($q$) variable $B$, we have that
	\[
	\phi(u)\;:=\;\phi_B(u)\;=\;1-q+qe^{iu},\qquad
	r_q(u)\;:=\;|\phi(u)|\;=\;\sqrt{1-4q(1-q)\sin^2(u/2)}.
	\]
	By \eqref{eq:mult} and
	$\phi_{-B'}(u)=\phi_{B'}(-u)=\overline{\phi(u)}$, the square
	$r_q^2=\phi\,\overline{\phi}=\phi_\zeta$ is the (nonnegative)
	characteristic function of $\zeta=B-B'$, the difference of two
	independent Bernoulli($q$) variables: $\zeta$ takes the values $0,\pm1$ with $\Pr(\zeta=1)=\Pr(\zeta=-1)=q(1-q)$. Hence, letting $\rho:=1-2q(1-q)\in[\tfrac12,1]$, we have
	\begin{equation}\label{eq:threepoint}
		\phi_\zeta(u)\;=\;\rho+(1-\rho)\cos u \;=\; r_q^2(u).
	\end{equation}
	For $s\ge1$ set
	\[
	F_q(s)\;=\;\frac2\pi\int_0^\infty\frac{1-r_q(t/\sqrt s)^{\,s}}{t^2}\,dt.
	\]
	We shall use two known results: the Fourier representation of the first absolute moment, due to von~Bahr and Esseen  \cite[Lemma~2]{vBE65}, and an inequality of Havrilla and Tkocz \cite[inequality~(11)]{HT21}. 
	
	\begin{lemma}[\cite{vBE65,HT21}]\label{lem:known}
		\textup{(i)} Every real random variable $Y$ with $\E|Y|<\infty$ and
		characteristic function $\phi_Y$ satisfies
		\[
		\E|Y|\;=\;\frac2\pi\int_0^\infty
		\bigl(1-\operatorname{Re}\phi_Y(t)\bigr)\,\frac{dt}{t^2} .
		\]
		\textup{(ii)} For every $\rho\in[\tfrac12,1]$, the function
		\[
		\mathcal F_\rho(\sigma)\;:=\;\frac2\pi\int_0^\infty
		\Bigl(1-\phi_\zeta(\tau/\sqrt{\sigma})^\sigma\Bigr)\,
		\frac{d\tau}{\tau^2}
		\]
		satisfies $\mathcal F_\rho(\sigma)\ge\mathcal F_\rho(1)$ for every
		$\sigma\ge1$.
	\end{lemma}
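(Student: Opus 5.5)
The statement is Lemma~\ref{lem:known}, which collects two results from the literature. The paper cites them rather than proving them, so the plan is to indicate how each is obtained. For (ii) I would rely on the cited inequality of Havrilla and Tkocz.

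\textbf{Part (i).} I would start from the elementary identity
\[
|y|=\frac2\pi\int_0^\infty\frac{1-\cos(ty)}{t^2}\,dt ,
\]
valid for every real $y$. It follows by substituting $s=t|y|$ and using $\int_0^\infty(1-\cos s)s^{-2}\,ds=\pi/2$; that last integral comes from integrating by parts to reach $\int_0^\infty \sin s/s\,ds=\pi/2$. Next I would set $y=Y$ and take expectations. The integrand $(1-\cos(tY))/t^2$ is nonnegative, so Tonelli's theorem lets us exchange $\E$ and $\int dt$. This gives $\E|Y|=\frac2\pi\int_0^\infty(1-\E\cos(tY))\,t^{-2}\,dt$, and $\E\cos(tY)=\operatorname{Re}\phi_Y(t)$. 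Finiteness of $\E|Y|$ is only needed so that both sides are finite; nonnegativity makes the identity hold in $[0,\infty]$ regardless.

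\textbf{Part (ii).} By \eqref{eq:threepoint}, $\phi_\zeta$ is the characteristic function of the symmetric three-point law with $\Pr(\zeta=\pm1)=(1-\rho)/2$. When $\sigma$ is an integer, part (i) gives
\[
\mathcal F_\rho(\sigma)=\E\Bigl|\tfrac{1}{\sqrt\sigma}\sum_{j\le\sigma}\zeta_j\Bigr| ,
\]
a normalized sum of i.i.d.\ copies of $\zeta$. The claim $\mathcal F_\rho(\sigma)\ge\mathcal F_\rho(1)=\E|\zeta|$ is then a Khintchine-type statement: among equal-weight normalized sums, the single term minimizes the first absolute moment. For real $\sigma\ge1$, Havrilla and Tkocz prove the inequality directly from the integral formula. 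Their method uses the fact that $\phi_\zeta\ge0$ (here $\rho\ge\tfrac12$ matters) and compares $\phi_\zeta(\tau/\sqrt\sigma)^\sigma$ with $\phi_\zeta(\tau)$ on suitable ranges of $\tau$. The log-concavity-type behaviour of $u\mapsto\rho+(1-\rho)\cos u$ near $0$ controls small $\tau$, and crude bounds handle large $\tau$.

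The main obstacle is this comparison for non-integer $\sigma$ and arbitrary $\rho\in[\tfrac12,1]$. It is exactly inequality~(11) of \cite{HT21}, which I would cite rather than reprove.
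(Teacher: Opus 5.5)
Your proposal is correct and follows the paper, which also simply cites \cite{vBE65} for (i) and inequality~(11) of \cite{HT21} for (ii); your Tonelli derivation of (i) from $|y|=\frac2\pi\int_0^\infty(1-\cos ty)\,t^{-2}\,dt$ is the standard proof of the cited formula and is fine as written. The one bridging step you should make explicit, as the paper's remark does, is that \cite{HT21} state (11) with $|\phi_\zeta(\tau/\sqrt\sigma)|^\sigma$; this coincides with $\phi_\zeta(\tau/\sqrt\sigma)^\sigma$ because $\phi_\zeta(u)=\rho+(1-\rho)\cos u\ge 2\rho-1\ge0$ for $\rho\ge\frac12$ --- that identification is where the paper uses $\rho\ge\frac12$, so without it the claim is not ``exactly'' (11).
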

	
	\begin{remark}
		In \cite[Lemma~2]{vBE65}, the formula is written as 	$\E|Y|\;=\;\frac1\pi\int_{-\infty}^\infty
		\bigl(1-\operatorname{Re}\phi_Y(t)\bigr)\,\frac{dt}{t^2}$, which is equivalent to part \textup{(i)}, since the integrand is even in $t$: indeed, $\operatorname{Re}\phi_Y(t)=\E\, \cos (tY)$ and cosine is an even function. In \cite[inequality~(11)]{HT21}, $\mathcal F_\rho(\sigma)\ge\mathcal F_\rho(1)$ is stated for $\mathcal F_\rho(\sigma)\;:=\;\frac2\pi\int_0^\infty
		\Bigl(1-|\phi_\zeta(\tau/\sqrt{\sigma})|^\sigma\Bigr)\,
		\frac{d\tau}{\tau^2}$. This implies part \textup{(ii)}: for $\rho\in[\frac{1}{2},1]$, \eqref{eq:threepoint} gives $\phi_\zeta(u)=\rho+(1-\rho)\cos u\ge2\rho-1\ge0$, so $|\phi_\zeta(u)|^\sigma=\phi_\zeta(u)^\sigma$ and the two definitions coincide.
	\end{remark}
	
	The following lemma, which is derived from Lemma~\ref{lem:known}, provides the Fourier-type lower bound we shall work with.
	
	\begin{lemma}\label{lem:fourier}
		Let $a$ be a unit vector in $\R^k$. Then, for every $q\in[0,1]$ and $t_0\in \R$, we have
		\[\ \E|X(q)-t_0|\ \ge\ \sum_{j:a_j\neq 0}a_j^2F_q(a_j^{-2}).\] 
		Moreover, for every $s\geq 2$,
		\[
		F_q(s)\;\ge\;F_q(2)\;=\;\sqrt2\,q(1-q).
		\]
	\end{lemma}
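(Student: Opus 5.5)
We prove the first inequality from the Fourier formula of Lemma~\ref{lem:known}(i), and the ``moreover'' part from Lemma~\ref{lem:known}(ii) together with a closed-form evaluation at $s=2$.

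\emph{Step 1: Fourier formula for the shifted sum.} Apply Lemma~\ref{lem:known}(i) to $Y=X(q)-t_0$. Its characteristic function is
\[
\phi_Y(t)=e^{-it t_0}\prod_j\phi(a_jt).
\]
Hence
\[
\operatorname{Re}\phi_Y(t)\le|\phi_Y(t)|=\prod_j r_q(a_jt),
\]
and therefore
\[
\E|X-t_0|\ \ge\ \frac2\pi\int_0^\infty\Bigl(1-\prod_j r_q(a_jt)\Bigr)\frac{dt}{t^2}.
\]
This step removes $t_0$, since the bound no longer depends on it.

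\emph{Step 2: split the product by weighted AM--GM.} Write $\prod_j r_q(a_jt)=\prod_j\bigl(r_q(a_jt)^{a_j^{-2}}\bigr)^{a_j^2}$, with the product over $a_j\ne0$; the exponents $a_j^2$ sum to $1$. Weighted AM--GM gives
\[
\prod_j r_q(a_jt)\le\sum_j a_j^2\,r_q(a_jt)^{a_j^{-2}},
\]
and hence
\[
1-\prod_j r_q(a_jt)\ \ge\ \sum_j a_j^2\bigl(1-r_q(a_jt)^{a_j^{-2}}\bigr).
\]
For each $j$, substitute $t=\tau/|a_j|=\tau\sqrt{s_j}$ with $s_j=a_j^{-2}\ge1$. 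Since $r_q$ is even, $r_q(a_jt)=r_q(\tau)$, and $dt/t^2=|a_j|\,d\tau/\tau^2$. Thus
\[
\frac2\pi\int_0^\infty\bigl(1-r_q(\tau)^{s_j}\bigr)\frac{dt}{t^2}=|a_j|\cdot\frac2\pi\int_0^\infty\bigl(1-r_q(\tau)^{s_j}\bigr)\frac{d\tau}{\tau^2}.
\]
Compare this with $F_q(s_j)$, where $t=\tau\sqrt{s_j}$ plays the role of the variable inside $r_q(t/\sqrt{s})$. In $F_q(s)$, substituting $t\mapsto u\sqrt{s}$ gives $F_q(s)=\frac{2}{\pi\sqrt s}\int_0^\infty(1-r_q(u)^s)\,du/u^2$. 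The factor $\sqrt{s_j}^{-1}=|a_j|$ matches, so the $j$-th term equals exactly $a_j^2F_q(a_j^{-2})$. It remains to track the constants carefully, but the structure is exact.

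\emph{Step 3: the ``moreover'' part.} We have $r_q^s=(r_q^2)^{s/2}=\phi_\zeta^{\,s/2}$. Put $\sigma=s/2\ge1$, write $\rho=1-2q(1-q)$, and substitute $t=\tau\sqrt2$. This gives
\[
F_q(s)=\sqrt2\,\mathcal F_\rho(\sigma)\cdot c
\]
for an explicit constant $c$; a direct change of variables shows $F_q(s)=\frac{2}{\pi}\int(1-\phi_\zeta(\tau/\sqrt\sigma)^\sigma)\frac{\sqrt2\,d\tau}{2\tau^2}$, so $F_q(s)=\mathcal F_\rho(\sigma)/\sqrt2$. Lemma~\ref{lem:known}(ii) then yields $F_q(s)\ge F_q(2)$ for $s\ge2$.

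For the value at $s=2$ we have $F_q(2)=\mathcal F_\rho(1)/\sqrt2$. By Lemma~\ref{lem:known}(i), $\mathcal F_\rho(1)=\E|\zeta|$, because $\phi_\zeta$ is real. Since $\zeta=\pm1$ each with probability $q(1-q)$ and $0$ otherwise, $\E|\zeta|=2q(1-q)$. Hence
\[
F_q(2)=\frac{2q(1-q)}{\sqrt2}=\sqrt2\,q(1-q).
\]

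\emph{Main obstacle.} The only delicate point is the bookkeeping of the substitutions in Steps 2 and 3, namely the powers of $\sqrt s$ and the factor $\sqrt2$. A useful check is $a=(1/\sqrt2,-1/\sqrt2)$. There the bound reads $\E|X|\ge F_q(2)=\sqrt2\,q(1-q)$, while directly $\E|X|=\frac{1}{\sqrt2}\cdot2q(1-q)=\sqrt2\,q(1-q)$. So the bound is tight in this case, which confirms the normalization.
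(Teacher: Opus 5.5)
Your proposal is correct and follows the paper's proof essentially step for step. Both use the bound $\operatorname{Re}\phi_{X-t_0}(t)\le\prod_j r_q(a_jt)$, weighted AM--GM with weights $a_j^2$, and the substitution $t=\sqrt2\,\tau$ giving $F_q(s)=\mathcal F_\rho(s/2)/\sqrt2$ together with $\mathcal F_\rho(1)=\E|\zeta|=2q(1-q)$. Your double change of variables in Step 2 is unnecessary, since $r_q(a_jt)=r_q(t/\sqrt{s_j})$ already makes the $j$-th integrand literally that of $F_q(s_j)$. You can also drop your hedges about ``tracking constants'' and ``an explicit constant $c$'', because the computations you wrote are complete and correct.
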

	
	\begin{proof}
		Since $X(q)=\sum_{j}a_jB_j(q)$, by \eqref{eq:mult},  $\phi_X(t)=\prod_j\phi(a_jt)$ and therefore
		\[\operatorname{Re}\phi_{X-t_0}(t)\le|\phi_{X-t_0}(t)|=|\E e^{it(X-t_0)}|=|\E e^{itX}|=|\phi_{X}(t)|=\prod_j|\phi(a_jt)|=\prod_jr_q(a_jt).\] 
		Thus, by Lemma~\ref{lem:known}\textup{(i)},
		\[
		\E|X(q)-t_0|\;\ge\;\frac2\pi\int_0^\infty
		\Bigl(1-\prod_{j:\, a_j\neq 0}r_q(a_jt)\Bigr)\frac{dt}{t^2}.
		\]
		For every $a_j\ne 0$, let $s_j:=a_j^{-2}\ge1$. As $r_q$ is
		an even function, $r_q(a_jt)=r_q(t/\sqrt{s_j})$, and therefore by the weighted AM--GM inequality,
		\[\prod_{j:\, a_j\neq 0}r_q(a_jt)=\prod_{j:\, a_j\neq 0}\bigl(r_q(t/\sqrt{s_j})^{s_j}\bigr)^{a_j^2}
		\le\sum_{j:\, a_j\neq 0}a_j^2\,r_q(t/\sqrt{s_j})^{s_j}.\]
		Thus, as $\|a\|_2=1$, we have that
		
		\begin{equation*}
			\E|X(q)-t_0|\;\ge\;\sum_{j:\, a_j\neq 0}\frac2\pi\int_0^\infty
			\Bigl(a_j^2-a_j^2r_q(t/\sqrt{s_j})^{s_j}\Bigr)\frac{dt}{t^2}
			\;=\;\sum_{j:a_j\neq 0}a_j^2F_q(s_j)\;=\;\sum_{j:a_j\neq 0}a_j^2F_q(a_j^{-2}).
		\end{equation*}

		For the second claim, by \eqref{eq:threepoint}, $r_q(u)^{2}=\phi_\zeta(u)$ and therefore, the substitution $t=\sqrt2\,\tau$ gives
		\[
		F_q(s)\;=\;\frac{\sqrt{2}}{\pi}\int_0^\infty\frac{1-r_q(\tau/\sqrt {s/2})^{\,s}}{\tau^2}\,d\tau\;=\;\frac{\sqrt{2}}{\pi}\int_0^\infty\frac{1-\phi_\zeta(\tau/\sqrt {s/2})^{\,s/2}}{\tau^2}\,d\tau\;=\;\frac{1}{\sqrt{2}}\,\mathcal F_\rho\bigl(s/2\bigr).
		\]
		Hence, for $s\ge2$, Lemma~\ref{lem:known}\textup{(ii)} gives
		$F_q(s)=\frac{1}{\sqrt{2}}\,\mathcal F_\rho\bigl(s/2\bigr)\ge\frac{1}{\sqrt{2}}\mathcal F_\rho(1)=F_q(2)$. In addition, by
		Lemma~\ref{lem:known}\textup{(i)} applied to $\zeta$,
		$\mathcal F_\rho(1)=\E|\zeta|=\Pr(\zeta\ne0)=2q(1-q)$, and therefore
		$F_q(2)=\sqrt2\,q(1-q)$, which completes the proof. 
	\end{proof}
	
	With the above Fourier framework, we have the following, which restricts our discussion to unit vectors $a$ with $\max_j|a_j|>1/\sqrt2$.
	
	\begin{corollary}\label{prop:nondom}
		Let $a$ be any unit vector in $\R^k$ with $\max_j|a_j|\le1/\sqrt2$. Then, for every $t_0\in \R$ and $q\in[0,1]$, we have 
		\[\E|X(q)-t_0|\ge\sqrt2\,q(1-q).\]
		As a result, $\int_0^1\E|X(q)-t_0|\,dq\ge\frac{\sqrt2}{6}$.
	\end{corollary}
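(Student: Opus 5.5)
The plan is to apply Lemma~\ref{lem:fourier} directly and then use the monotonicity statement in its second part. Fix $t_0\in\R$ and $q\in[0,1]$. The first part of Lemma~\ref{lem:fourier} gives
\[
\E|X(q)-t_0|\;\ge\;\sum_{j:a_j\ne0}a_j^2F_q(a_j^{-2}).
\]
The hypothesis $\max_j|a_j|\le 1/\sqrt2$ means that every nonzero coordinate satisfies $a_j^2\le\frac12$. Equivalently, $s_j:=a_j^{-2}\ge2$ for each such $j$. This is exactly the range where the second part of Lemma~\ref{lem:fourier} applies, and it yields $F_q(s_j)\ge F_q(2)=\sqrt2\,q(1-q)$.

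Next we substitute this into the sum. Since the weights $a_j^2$ are nonnegative and $\sum_{j:a_j\ne0}a_j^2=\|a\|_2^2=1$,
\[
\E|X(q)-t_0|\;\ge\;\sqrt2\,q(1-q)\sum_{j:a_j\neq0}a_j^2\;=\;\sqrt2\,q(1-q).
\]
This is the pointwise claim, and it holds uniformly in $q$ and $t_0$. The endpoints $q\in\{0,1\}$ need no separate treatment: there the right-hand side is $0$, and the inequality is trivial anyway.

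For the integrated statement, we integrate the pointwise bound over $q\in[0,1]$. This is legitimate because $q\mapsto\E|X(q)-t_0|$ is a polynomial in $q$, hence measurable. Using $\int_0^1 q(1-q)\,dq=\frac16$, we get
\[
\int_0^1\E|X(q)-t_0|\,dq\;\ge\;\sqrt2\int_0^1 q(1-q)\,dq\;=\;\frac{\sqrt2}{6}.
\]

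I do not expect a genuine obstacle, since all the analytic difficulty sits inside Lemma~\ref{lem:fourier} and the Havrilla--Tkocz monotonicity. The one point to get right is the direction of the threshold. The condition $|a_j|\le1/\sqrt2$ must translate into $s_j\ge2$, not $s_j\le2$, so that the comparison $F_q(s)\ge F_q(2)$ is the one that is available. This is also why vectors with a coordinate of modulus exceeding $1/\sqrt2$ fall outside the corollary and must be handled separately.
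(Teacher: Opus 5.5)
Your proposal is correct and matches the paper's proof: you apply both parts of Lemma~\ref{lem:fourier} using $a_j^{-2}\ge 2$, sum with $\sum_{j:a_j\ne0}a_j^2=1$, and integrate $\sqrt2\,q(1-q)$ over $[0,1]$. Your remarks on measurability (the expectation is a polynomial in $q$) and on the endpoints are harmless additions that the paper leaves implicit.
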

	
	\begin{proof}
		As $\max_j|a_j|\le 1/\sqrt2$, $a_j^{-2}\ge2$ for every $j$ with $a_j\neq0$. Thus, by Lemma~\ref{lem:fourier}, $\E|X(q)-t_0|\ge\sum_{j:a_j\neq 0}a_j^2F_q(a_j^{-2})\ge F_q(2)\sum_{j:a_j\neq 0}a_j^2=\sqrt2\,q(1-q)$ and therefore \[\int_0^1\E|X(q)-t_0|\,dq\ge\int_0^1\sqrt2\,q(1-q)\,dq=\frac{\sqrt2}{6},\]
		completing the proof.
	\end{proof}
	
	Before turning to such vectors, we note that the unweighted case is already complete. 
	
	\begin{remark}\label{rem:unweighted}
		For \emph{unweighted} orgraphs (unit weights, $w(D)=|A(D)|$), Theorem~\ref{thm:main} needs none of what follows: at a vertex of degree $d$, the normalized coefficient vector in the proof of Theorem~\ref{thm:main} is $(\pm1,\dots,\pm1)/\sqrt d$, which for $d\ge2$ has no coordinate greater than $1/\sqrt{2}$, so Corollary~\ref{prop:nondom} applies, and for $d=1$ the integral equals $\int_0^1q\,dq=\frac12>\frac{\sqrt{2}}{6}$. The dominated analysis below is
		needed only for genuinely weighted instances, where a single heavy arc produces a dominated coefficient vector at a vertex of arbitrarily large degree. 
	\end{remark}
	
	We call a unit vector $a$ \emph{dominated} if
	$\max_j|a_j|>\frac1{\sqrt2}$. By Corollary~\ref{prop:nondom}, only dominated vectors remain to be treated, and we now give, once and for all, a \emph{standard form} for them. The notation introduced here is kept for the remainder of the paper. A global sign change leaves $\E|X(q)|$ invariant, so we may assume that a dominant coordinate is positive, and we list it first and split the remaining coordinates by sign:
	\[
	a=(\alpha,p_1,\dots,p_r,-c_1,\dots,-c_s),\qquad
	\alpha=\max_j|a_j|>\tfrac1{\sqrt2},
	\]
	with $p_1\ge\dots\ge p_r>0$ and $c_1\ge\dots\ge c_s>0$ (zero coordinates are discarded: they affect neither $\E|X(q)|$ nor $\norm a_2$). Let $B,B_1,\dots,B_r,B_1',\dots,B_s'$ be the corresponding i.i.d. Bernoulli($q$) random variables. Then, we have
	\[
	X\;=\;\alpha B+P-Z,\qquad P\;=\;\sum_{i=1}^rp_iB_i,\qquad
	Z\;=\;\sum_{j=1}^sc_jB_j',
	\]
	and we set
	\[
	R\;=\;\sum_{i=1}^rp_i,\qquad L\;=\;\sum_{j=1}^sc_j,\qquad
	m\;=\;L-R,\qquad \gamma^2\;=\;1-\alpha^2,
	\]
	\[
	J(a)\;=\;\int_0^1\E|X(q)|\,dq,\qquad
	U_0\;=\;\int_0^1(1-q)\,\E(P-Z)^+dq,\qquad
	U_1\;=\;\int_0^1q\,\E(Z-P-\alpha)^+dq.
	\]
	The following result provides an exact formula for $J(a)$, and a lower bound on $U_0$ when $r\ge1$ and $m>0$. Its proof does not use the assumption $\alpha>\frac{1}{\sqrt2}$.
	
	\begin{lemma}\label{lem:mixedid}
		Let $a$ be a dominated unit vector in standard form. Then, \[\displaystyle
		J(a)\;=\;\frac\alpha2-\frac m6+2\,(U_0+U_1).\]
		Furthermore, if $r\ge1$ and $m>0$ then,
		\[
		U_0\;\ge\;(m+p_1)\Bigl(\frac{\theta^{3}}{6}-\frac{\theta^{4}}{12}\Bigr),
		\]
		where $\theta:=\dfrac{p_1}{m+p_1}$.
	\end{lemma}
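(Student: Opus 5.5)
The plan is to condition on the dominant Bernoulli variable $B$, rewrite absolute values through positive parts, and integrate in $q$. For the lower bound on $U_0$, Jensen's inequality applied to all variables except $B_1$ should give exactly the stated polynomial in $\theta$.

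\textbf{The identity.} Since $B$ is independent of $(P,Z)$, conditioning on $B$ gives
\[
\E|X|=(1-q)\,\E|P-Z|+q\,\E|\alpha+P-Z|.
\]
I will use the two elementary identities $|y|=2y^+-y$ and $|y|=y+2(-y)^+$.

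For the first term I apply $|y|=2y^+-y$. Since $\E(P-Z)=q(R-L)=-qm$, this gives
\[
\E|P-Z|=2\,\E(P-Z)^++qm.
\]
For the second term I apply $|y|=y+2(-y)^+$, which gives
\[
\E|\alpha+P-Z|=\alpha-qm+2\,\E(Z-P-\alpha)^+.
\]
Substituting and integrating over $q\in[0,1]$, the positive-part terms produce exactly $2(U_0+U_1)$. The remaining terms are
\[
\int_0^1\bigl(q(1-q)m+q\alpha-q^2m\bigr)\,dq=\frac m6+\frac\alpha2-\frac m3=\frac\alpha2-\frac m6 .
\]
This proves the formula, and it never uses $\alpha>1/\sqrt2$.

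\textbf{The bound on $U_0$.} Write
\[
P-Z=p_1B_1+W,\qquad W:=\sum_{i\ge2}p_iB_i-Z,
\]
with $W$ independent of $B_1$. The map $x\mapsto x^+$ is convex, so conditional Jensen over $W$ gives
\[
\E(P-Z)^+\ge\E_{B_1}\bigl(p_1B_1+\E W\bigr)^+ .
\]
Here
\[
\E W=q(R-p_1)-qL=-q(m+p_1).
\]
When $B_1=0$ the quantity $(\E W)^+$ vanishes, because $m+p_1>0$. When $B_1=1$ (probability $q$) we keep $(p_1-q(m+p_1))^+$. Hence
\[
\E(P-Z)^+\ge q\,\bigl(p_1-q(m+p_1)\bigr)^+=(m+p_1)\,q\,(\theta-q)^+ .
\]
Since $m>0$ we have $\theta\in(0,1)$. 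Therefore
\[
U_0\ge(m+p_1)\int_0^\theta q(1-q)(\theta-q)\,dq .
\]
A direct polynomial integration evaluates this integral as $\frac{\theta^3}{2}-\frac{\theta^3}{3}-\frac{\theta^4}{3}+\frac{\theta^4}{4}=\frac{\theta^3}{6}-\frac{\theta^4}{12}$, which is the claimed bound.

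\textbf{Main obstacle.} The only step that needs an idea is choosing which variables to average out in the Jensen step. Keeping $B_1$ random and averaging the rest is what produces the factor $q(\theta-q)^+$. Everything else is bookkeeping of signs, in particular $\E(P-Z)=-qm$.
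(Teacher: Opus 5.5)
Your proposal is correct and follows the paper's proof: condition on $B$, convert the absolute values to positive parts using $\E(P-Z)=-qm$ and integrate in $q$, then for $U_0$ isolate $B_1$ and apply Jensen to the remaining variables to get $\E(P-Z)^+\ge(m+p_1)\,q\,(\theta-q)^+$. The only cosmetic difference is the order of steps: the paper first discards the event $\{B_1=0\}$ by nonnegativity and then applies Jensen, whereas you apply conditional Jensen first and then note that the $\{B_1=0\}$ contribution vanishes because $m+p_1>0$.
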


	\begin{proof}
		Conditioning on $B$ gives
		$\E|X|=(1-q)\,\E|P-Z|+q\,\E|\alpha+P-Z|$. Since $|x|=x+2(-x)^+$
		and $\E(Z-P)=qm$, we have $\E|P-Z|=\E|Z-P|=qm+2\,\E(P-Z)^+$ and
		$\E|\alpha+P-Z|=\alpha-qm+2\,\E(Z-P-\alpha)^+$, so
		\begin{eqnarray*}
			J(a) &=&  \int_0^1\Bigl[(1-q)\bigl(qm+2\,\E(P-Z)^+\bigr)
			+q\bigl(\alpha-qm+2\,\E(Z-P-\alpha)^+\bigr)\Bigr]dq\\
			&=&\alpha\int_0^1 q\,dq+m\int_0^1(q-2q^2)\,dq +2\, (U_0+U_1)\\
			\;&=&\;\frac\alpha2-\frac m6+2\,(U_0+U_1).
		\end{eqnarray*}
		This proves the identity. For the lower bound on $U_0$,
		let $P=p_1B_1+P'$ with $P'=\sum_{i\ge2}p_iB_i$, and note that $B_1$ is independent of the pair $(P',Z)$. Since $(P-Z)^+\ge0$, 
		\[
		\E(P-Z)^+\;\ge\;\E\bigl[(P-Z)^+\;\big|\; B_1=1\bigr]\Prob(B_1=1)
		\;=\;q\,\E\bigl(p_1+P'-Z\bigr)^+.
		\]
		As the map $x\mapsto(p_1-x)^+$ is convex, Jensen's inequality
		applied to the random variable $Z-P'$ yields
		\[
		\E\bigl(p_1+P'-Z\bigr)^+\;=\;\E\bigl(p_1-(Z-P')\bigr)^+
		\;\ge\;\bigl(p_1-\E(Z-P')\bigr)^+,
		\]
		and $\E(Z-P')=\E(Z-P)+p_1\,\E B_1=q\,(m+p_1)$. As $m+p_1>0$, we may
		factor it out of the positive part:
		\[
		\bigl(p_1-q\,(m+p_1)\bigr)^+=(m+p_1)\Bigl(\frac{p_1}{m+p_1}-q\Bigr)^{\!+}
		=(m+p_1)\,(\theta-q)^+ .
		\]
		Combining them, we have $\E(P-Z)^+\ge \,(m+p_1)q(\theta-q)^+$
		for every $q\in[0,1]$. Moreover $\theta\in(0,1)$ because $m>0$, so
		$(\theta-q)^+$ vanishes for $q\ge\theta$ and the definition of
		$U_0$ gives
		\[
		U_0\;\ge\;(m+p_1)\int_0^\theta(1-q)\,q\,(\theta-q)\,dq
		\;=\;(m+p_1)\Bigl(\int_0^\theta q\,(\theta-q)\,dq
		-\int_0^\theta q^2(\theta-q)\,dq\Bigr),
		\]
		which completes the proof as
		$\int_0^\theta q\,(\theta-q)\,dq=\frac{\theta^3}{2}-\frac{\theta^3}{3}
		=\frac{\theta^3}{6}$ and $\int_0^\theta q^2(\theta-q)\,dq=\frac{\theta^4}{3}-\frac{\theta^4}{4}
		=\frac{\theta^4}{12}$.
	\end{proof}
	
	With Lemma \ref{lem:mixedid}, we can now prove the case when $r=0$.
	
	\begin{theorem}\label{thm:onesigned}
		Let $a=(\alpha,-c_1,\dots,-c_s)$ be a unit vector in standard form with $r=0$. Then
		\[
		\int_0^1\E|X(q)|\,dq\;\ge\;\frac{\sqrt2}{6}.
		\]
	\end{theorem}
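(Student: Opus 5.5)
By Lemma~\ref{lem:mixedid} with $r=0$ we have $P\equiv0$, so $(P-Z)^+\equiv0$ and $U_0=0$. Also $m=L=\sum_j c_j$, which gives the exact identity
\[
J(a)\;=\;\frac\alpha2-\frac L6+2U_1,\qquad U_1=\int_0^1 q\,\E(Z-\alpha)^+\,dq .
\]
The plan is to bound $U_1$ from below well enough to offset the negative drift term $-L/6$. The constraints available are $\alpha>\frac1{\sqrt2}$, $\sum_j c_j^2=\gamma^2=1-\alpha^2<\frac12$ and $c_j\le\gamma<\alpha$.

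\emph{First step (Jensen).} Since $x\mapsto(x-\alpha)^+$ is convex and $\E Z=qL$, we get $\E(Z-\alpha)^+\ge(qL-\alpha)^+$. If $L\le\alpha$, we only use $U_1\ge0$ and obtain $J\ge\frac\alpha2-\frac L6$. This is at least $\frac{\sqrt2}6$ whenever $L\le3\alpha-\sqrt2$, and in particular whenever $L\le\alpha$ and $\alpha$ is not too close to $\frac1{\sqrt2}$. If $L>\alpha$, integrating over $q\in[\alpha/L,1]$ gives
\[
J(a)\;\ge\;\frac L2-\frac\alpha2+\frac{\alpha^3}{3L^2}.
\]
Minimizing over $L$ (the minimum is at $L^3=\frac43\alpha^3$) yields $J\ge\kappa_0\alpha$ with $\kappa_0=\frac{c}{2}-\frac12+\frac1{3c^2}\approx0.3255$, where $c=(4/3)^{1/3}$. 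This settles every $\alpha\ge\frac{\sqrt2}{6\kappa_0}\approx0.724$.

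\emph{Second step (the window $\frac1{\sqrt2}<\alpha<0.724$).} Here Jensen alone is slightly too lossy, because it discards the fluctuation of $Z$. For this range I would combine it with the Fourier bound of Lemma~\ref{lem:fourier}. Every $c_j\le\gamma<\frac1{\sqrt2}$, so $c_j^{-2}\ge2$, and therefore
\[
\E|X(q)|\;\ge\;\alpha^2F_q(\alpha^{-2})+\gamma^2\sqrt2\,q(1-q).
\]
The deficit then comes only from the dominant coordinate, with $\alpha^{-2}\in(1,2)$ close to $2$. To handle this I would either show that $F_q(s)$ is close to $F_q(2)$ for $s$ near $2$ (via continuity of the integral defining $F_q$), or evaluate $J$ directly for the extremal configuration.

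More concretely, I expect the cleanest route to be a refined lower bound for $U_1$ that keeps one coordinate exactly, in the style of the $U_0$ bound in Lemma~\ref{lem:mixedid}. Conditioning on $B_1'=1$ gives $\E(Z-\alpha)^+\ge q\,\bigl(c_1-\alpha+q(L-c_1)\bigr)^+$. In addition, applying Jensen to the remaining coordinates gives the baseline $(qL-\alpha)^+$. Taking the pointwise maximum of these bounds yields a lower bound for $J$ depending only on $(\alpha,c_1,L)$, subject to $c_1\le\gamma$ and $L\le c_1+(\gamma^2-c_1^2)/c_1$ (from $\sum c_j^2=\gamma^2$ and $c_j\le c_1$). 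One then checks the resulting explicit inequality on this compact parameter region.

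\emph{Main obstacle.} The hardest part is this window near $\alpha=\frac1{\sqrt2}$, and specifically the regime where $L$ is comparable to $\alpha$. In that regime the Jensen bound degenerates, and the equality case $a=(\frac1{\sqrt2},-\frac1{\sqrt2})$ (that is, $s=1$, $c_1=\gamma=L$) must come out exactly tight. The refined bound must therefore be exact at $s=1$, which the conditioning-on-$B_1'$ bound is; as a sanity check, at that point it gives $\frac\alpha2-\frac\gamma6+\frac23(\gamma-\alpha)^+\cdot(\ldots)$, which equals $\frac{\sqrt2}{6}$. It must also degrade gracefully as mass moves from $c_1$ into small coordinates. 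I would first try to prove monotonicity of the lower bound in $L$ for fixed $(\alpha,c_1)$, which would reduce the problem to the two endpoint cases $s=1$ and $L$ maximal. If that fails, I would fall back on a numerical-interval verification over the compact region.
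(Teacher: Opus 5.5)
\textbf{The gap.} Your skeleton matches the paper's proof: the identity $J(a)=\frac\alpha2-\frac L6+2U_1$ from Lemma~\ref{lem:mixedid}, the trivial case $L\le 3\alpha-\sqrt2$, and the bound $\E(Z-\alpha)^+\ge q\,(c_1-\alpha+q(L-c_1))^+$ from conditioning on $B_1'=1$ and applying Jensen to the rest. The trivial case already covers all of $L\le\alpha$ with no restriction on $\alpha$, and your pure-Jensen first step is correct but not needed. The problem is that the decisive verification is never carried out, and the reduction you set up for it is wrong.

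From $c_j\le c_1$ you get $c_j^2\le c_1c_j$, hence $\gamma^2\le c_1L$, i.e.\ $L\ge\gamma^2/c_1$. This is a \emph{lower} bound on $L$. In fact $L$ is unbounded above: spread $\gamma^2-c_1^2$ over many tiny coordinates. So your region is not compact and has no ``$L$ maximal'' endpoint. Worse, on the region you wrote down, the inequality you plan to check is false. Let $c_1\to0$ with $L=(4/3)^{1/3}\alpha$ fixed. The conditioned bound $q\,(qL-\alpha)^+$ is then dominated by the Jensen baseline, and the maximum of your two bounds tends to $\nu\alpha$ (your $\kappa_0\alpha$). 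This is $<\frac{\sqrt2}{6}$ for $\alpha<\alpha_c\approx0.724$. The correct constraint $c_1\ge\gamma^2/L$ is exactly what excludes such points, and it is the key input of the paper's proof.

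\textbf{Your fallbacks do not close it.} Fix $(\alpha,c_1)$ and set $q_1:=\frac{\alpha-c_1}{L-c_1}\in(0,1]$. The lower bound on $J(a)$ produced by the conditioned estimate is then $\frac{2L+c_1-\alpha}{6}+\frac{(\alpha-c_1)^4}{6(L-c_1)^3}$. Its $L$-derivative $\frac13-\frac{q_1^4}{2}$ changes sign at $q_1=(2/3)^{1/4}$, so there is no monotonicity in $L$. A purely numerical check cannot certify a region on whose closure equality is approached: along $(\alpha,-\gamma)$ with $\alpha\downarrow\frac1{\sqrt2}$, the bound tends to $\frac{\sqrt2}{6}$. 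The Fourier route would need $\int_0^1F_q(s)\,dq\ge\frac{\sqrt2}{6}$ for $s=\alpha^{-2}<2$. Lemma~\ref{lem:fourier} says nothing there, and continuity in $s$ only gives $\frac{\sqrt2}{6}-o(1)$, because the bound has no slack at $s=2$.

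\textbf{The missing idea.} In the regime $L>3\alpha-\sqrt2$ no optimization is needed. One has $\int_0^1q^2(q-q_1)^+dq=\frac{3-4q_1+q_1^4}{12}\ge\frac{1-q_1}{12}$ for $0\le q_1\le\frac34$. Hence $2U_1\ge\frac{(L-c_1)(1-q_1)}{6}=\frac{L-\alpha}{6}$, which cancels the drift $-\frac L6$ exactly and gives $J(a)\ge\frac\alpha3\ge\frac{\sqrt2}{6}$. It remains to show $q_1\le\frac34$. Since $q_1$ decreases in $c_1$ and $c_1\ge\gamma^2/L$, we get $q_1\le\frac{\alpha L-\gamma^2}{L^2-\gamma^2}$. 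This is decreasing in $L$ on $(\gamma,\infty)$ and equals $\frac{4\alpha+\sqrt2}{10\alpha-\sqrt2}\le\frac34$ at $L=3\alpha-\sqrt2$.
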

	
	\begin{proof}
		By Lemma~\ref{lem:mixedid} (as in this case $P=0$ and $R=0$, and therefore $U_0=0$ and $m=L$), we have 
		
		\begin{equation}\label{eq:p0u00}
			\int_0^1\E|X(q)|\,dq\;= \; \frac\alpha2-\frac L6+2U_1.
		\end{equation}
		
		If $L\le3\alpha-\sqrt2$, then as $U_1\geq 0$, \eqref{eq:p0u00} gives $	\int_0^1\E|X(q)|\,dq\ge\frac\alpha2-\frac L6\ge\frac\alpha2-\frac{3\alpha-\sqrt2}{6}
		=\frac{\sqrt2}{6}$ and therefore we are done.
		
		Assume now $L>3\alpha-\sqrt2\ (>\alpha$, as $\alpha>\tfrac1{\sqrt2})$. As $c_1^2\le\sum_jc_j^2=1-\alpha^2=\gamma^2$, we have $c_1\le\gamma<1/\sqrt{2}<\alpha$, and therefore $L>c_1$. On the event $\{B_1=1\}$ we have
		$(Z-\alpha)^+=(c_1+Z'-\alpha)^+$ with $Z'=Z-c_1B_1$, so Jensen's inequality
		for the convex map $x\mapsto x^+$ gives
		\[
		\E(Z-\alpha)^+\;\ge\;q\,\E\bigl(c_1+Z'-\alpha\bigr)^+
		\;\ge\;q\bigl(c_1+qL_1'-\alpha\bigr)^+\;=\;qL_1'\,(q-q_1)^+,
		\]
		where $L_1'=L-c_1>0$ and $q_1=(\alpha-c_1)/L_1'$. Hence,
		\[U_1\ge L_1'\int_0^1q^2(q-q_1)^+dq,\] 
		and we claim
		\begin{equation}\label{eq:qint}
			\int_0^1q^2(q-q_1)^+\,dq\;\ge\;\frac{1-q_1}{12}
			\qquad\text{whenever }q_1\le 3/4 .
		\end{equation}
		Indeed, for $q_1\in[0,1]$ the left side equals
		\[\int_{q_1}^1q^2(q-q_1)\,dq\;=\;\frac{1}{4}-\frac{q_1}{3}-\frac{q_1^4}{4}+\frac{q_1^4}{3}\;=\;\frac{3-4q_1+q_1^4}{12},\]
		and $3-4q_1+q_1^4\geq 1-q_1$ iff $3q_1-q_1^4\le2$, which holds for $q_1\in[0,\tfrac34]$, as there $3q_1-q_1^4$ is increasing (its derivative is $3-4q_1^3>0$) with value $\tfrac{495}{256}<2$ at $q_1=\tfrac34$. On the other hand, for $q_1<0$ the left side
		is $\int_{0}^1q^2(q-q_1)\,dq=\tfrac14-\tfrac{q_1}3\ge\tfrac{1-q_1}{12}$. Thus, if $q_1\leq 3/4$, then by \eqref{eq:qint}, $2U_1\ge\tfrac{L_1'(1-q_1)}{6}=\tfrac{L-\alpha}{6}$, and therefore \eqref{eq:p0u00} gives
		\[	\int_0^1\E|X(q)|\,dq\;\ge\;\frac\alpha2-\frac L6+\frac{L-\alpha}6\;=\;\frac\alpha3\;\ge\;\frac{\sqrt2}6.\]
		
		Thus, it remains to prove $q_1\le 3/4$. As
		$\gamma^2=\sum_jc_j^2\le c_1\sum_jc_j=c_1L$, we have $c_1\ge\gamma^2/L$, and $q_1=(\alpha-c_1)/(L-c_1)$ is non-increasing in $c_1$ on $[0,L)$ (as its derivative in $c_1$ is $(\alpha-L)/(L-c_1)^2<0$), so
		\[
		q_1\;\le\; \frac{\alpha -\gamma^2/L}{L-\gamma^2/L}\;=\; \frac{\alpha L-\gamma^2}{L^2-\gamma^2}\;=:\;\tilde q(L),
		\]
		the denominator being positive since $L>\alpha\ge\gamma$. Moreover, as the numerator of $\tilde q'(L)$
		is $-\alpha L^2+2\gamma^2L-\alpha\gamma^2$, a downward parabola with discriminant $4\gamma^2(\gamma^2-\alpha^2)<0$, $\tilde q'(L)<0$ on $(\gamma,\infty)$ and therefore $\tilde q$
		is strictly decreasing on $(\gamma,\infty)$. Thus, as
		$L\ge3\alpha-\sqrt2>\alpha\geq \gamma$,
		\[
		q_1\;\le\;\tilde q(3\alpha-\sqrt2)
		\;=\;\frac{4\alpha^2-\sqrt2\,\alpha-1}{10\alpha^2-6\sqrt2\,\alpha+1}\;=\;\frac{4(\alpha-\tfrac1{\sqrt2})(\alpha+\tfrac{\sqrt2}4)}{10(\alpha-\tfrac1{\sqrt2})(\alpha-\tfrac{\sqrt2}{10})}
		\;=\;\frac{4\alpha+\sqrt2}{10\alpha-\sqrt2}\;\le\;\frac34 ,
		\]
		where the second last step cancels the common factor $\alpha-\tfrac1{\sqrt2}$ (we can do it as $\alpha>1/\sqrt{2}$) and the last inequality is
		equivalent to $14\alpha\ge7\sqrt2$, i.e. to
		$\alpha\ge\tfrac1{\sqrt2}$. This completes the proof.
	\end{proof}
	
	Thus, from now on we may focus on the case when $r\geq 1$.
	
	We end this section by proving the following unconditional lower bound, which will be useful in the next section. Note that the proofs of \eqref{eq:budget} and of part (i) do not use the assumption $\alpha>1/\sqrt2$.
	
	\begin{corollary}\label{cor:floor}
		Let $\nu:=\frac{\sqrt[3]{36}}{4}-\frac12=0.3254\ldots$,
		$\beta_0:=\frac12\Bigl(\frac{\sqrt2}{6}-\frac{\nu}{\sqrt2}\Bigr)=0.0027\ldots
		<0.003$, and for $m>\alpha$, let
		$g(m):=\frac m2-\frac\alpha2+\frac{\alpha^3}{3m^2}$. Then, every dominated
		unit vector $a$ satisfies
		\begin{equation}\label{eq:budget}
			J(a)\;\ge\;\frac\alpha2-\frac m6+2\int_0^1q\,(qm-\alpha)^+dq+2U_0 ,
		\end{equation}
		and consequently:
		\begin{itemize}
			\item[\textup{(i)}] $J(a)\ge\nu\alpha+2U_0$, and in particular, $J(a)\ge\frac{\sqrt2}{6}$ whenever
			$\alpha\ge\alpha_c:=\frac{\sqrt2}{6\nu} =0.7241\ldots<3/4$.
			\item[\textup{(ii)}] If $m>\alpha\geq 1/\sqrt{2}$, then
			$J(a)\ge\frac{\sqrt2}{6}-2\beta_0+2U_0$ and in particular, $J(a)\ge\frac{\sqrt2}{6}$ if $U_0\ge\beta_0$, or if $m\ge0.87$ and $\alpha\le\alpha_c$.
		\end{itemize}
	\end{corollary}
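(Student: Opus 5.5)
The plan is to prove \eqref{eq:budget} first, using the exact identity of Lemma~\ref{lem:mixedid}, and then read off (i) and (ii) by minimizing an explicit one-variable function of $m$.

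\textbf{Step 1: the budget inequality.} Lemma~\ref{lem:mixedid} gives $J(a)=\frac\alpha2-\frac m6+2(U_0+U_1)$, so it suffices to bound $U_1$ from below. The map $x\mapsto(x-\alpha)^+$ is convex, and $\E(Z-P)=qm$. Jensen's inequality therefore gives $\E(Z-P-\alpha)^+\ge(qm-\alpha)^+$ for each $q$. Multiplying by $q$ and integrating yields $U_1\ge\int_0^1q(qm-\alpha)^+dq$, which is \eqref{eq:budget}. This step does not use $\alpha>1/\sqrt2$.

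\textbf{Step 2: part (i).} Split according to the value of $m$.
\begin{itemize}
\item If $m\le\alpha$, drop the nonnegative integral in \eqref{eq:budget}. Then $J(a)\ge\frac\alpha2-\frac\alpha6+2U_0=\frac\alpha3+2U_0\ge\nu\alpha+2U_0$, since $\nu<\frac13$.
\item If $m>\alpha$, compute directly:
\[
\int_{\alpha/m}^1q(qm-\alpha)\,dq=\frac m3-\frac\alpha2+\frac{\alpha^3}{6m^2}.
\]
Substituting this into \eqref{eq:budget} gives exactly $J(a)\ge g(m)+2U_0$.
\end{itemize}
Next minimize $g$ over $m>0$. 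From $g'(m)=\frac12-\frac{2\alpha^3}{3m^3}$, the unique minimizer is $m_*=(4/3)^{1/3}\alpha$. Using $\frac{1}{3(4/3)^{2/3}}=\frac{(4/3)^{1/3}}{4}$, the minimum value is
\[
g(m_*)=\alpha\Bigl(\tfrac34(4/3)^{1/3}-\tfrac12\Bigr)=\alpha\Bigl(\tfrac{\sqrt[3]{36}}4-\tfrac12\Bigr)=\nu\alpha.
\]
Hence $J(a)\ge\nu\alpha+2U_0$ in all cases. Since $U_0\ge0$, we get $J(a)\ge\nu\alpha\ge\frac{\sqrt2}{6}$ whenever $\alpha\ge\alpha_c$.

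\textbf{Step 3: part (ii).} Assume $m>\alpha\ge1/\sqrt2$. By Step 2, $J(a)\ge\nu\alpha+2U_0\ge\frac{\nu}{\sqrt2}+2U_0$. By the definition of $\beta_0$, $\frac{\nu}{\sqrt2}=\frac{\sqrt2}6-2\beta_0$, which is the first claim. The case $U_0\ge\beta_0$ then follows immediately.

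For the case $m\ge0.87$ and $\alpha\le\alpha_c$, I would use the sharper bound $J(a)\ge g(m)$. Note that $m_*=(4/3)^{1/3}\alpha\le1.101\cdot0.7242<0.80<0.87$. So $g$ is increasing on $[0.87,\infty)$, and $g(m)\ge h(\alpha):=0.435-\frac\alpha2+\frac{\alpha^3}{3(0.87)^2}$. Since $h'(\alpha)=-\frac12+\frac{\alpha^2}{0.87^2}<0$ for $\alpha<0.87$, $h$ is decreasing on $[1/\sqrt2,\alpha_c]$. Its minimum is therefore $h(\alpha_c)\approx0.0729+0.1672\approx0.240>\frac{\sqrt2}6\approx0.2357$.

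The main obstacle is not conceptual but numerical: this last check has only a margin of about $0.004$. I would verify it with rigorous rational bounds, for instance using $\alpha_c<0.7242$ in the cubic term and $\alpha_c>0.7241$ in the linear term.
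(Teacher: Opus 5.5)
Your Steps 1 and 2 and the first part of Step 3 match the paper's proof: Jensen for $x\mapsto x^+$, the explicit integral giving $g(m)+2U_0$, minimization at $m_*=(4/3)^{1/3}\alpha$, and the reduction $g(m)\ge g(0.87)$ for $m\ge0.87$. The error is in the final monotonicity claim. You assert $h'(\alpha)=-\frac12+\frac{\alpha^2}{0.87^2}<0$ for $\alpha<0.87$, but $\alpha<0.87$ only gives $h'(\alpha)<\frac12$. In fact $h'(\alpha)<0$ exactly when $\alpha<0.87/\sqrt2\approx0.615$. On the relevant interval $[1/\sqrt2,\alpha_c]$ you have $h'(\alpha)\ge-\frac12+\frac{1}{2\cdot0.87^2}\approx0.16>0$. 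So $h$ is \emph{increasing} there, and its minimum is at the left endpoint, not at $\alpha_c$:
\[
h(1/\sqrt2)\;=\;0.435-\frac{\sqrt2}{4}+\frac{\sqrt2}{12\cdot0.87^2}\;\approx\;0.435-0.3536+0.1557\;=\;0.2371 .
\]
This still exceeds $\frac{\sqrt2}{6}\approx0.2357$, so the conclusion of (ii) survives. However, the margin is about $0.0014$, not $0.004$, and the value $h(\alpha_c)\approx0.240$ you computed is the maximum of $h$ on the interval, not its minimum. With this correction your argument coincides with the paper's.

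As written, your argument for this subcase never uses the hypothesis $\alpha\ge1/\sqrt2$, which is a warning sign. The same reasoning would give $h\ge h(\alpha_c)$ on, say, $[0.615,\alpha_c]$, yet $h(0.615)\approx0.230<\frac{\sqrt2}{6}$. The left endpoint $1/\sqrt2$ is exactly where that hypothesis enters. Your proposed rigorous rounding also has the directions reversed. To bound $0.435-\frac\alpha2+\frac{\alpha^3}{3\cdot0.87^2}$ from below, you need an upper bound on $\alpha$ in the linear term and a lower bound in the cubic term. After the fix this is moot, since the evaluation point $1/\sqrt2$ is explicit. You only need to round $\sqrt2$ upward in $\frac{\sqrt2}{4}$ and downward in $\frac{\sqrt2}{12\cdot0.87^2}$.
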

	
	\begin{proof}
		By Lemma~\ref{lem:mixedid} and Jensen's inequality applied
		to the convex map $x\mapsto x^+$ (recall that $\E(Z-P)=qm$),
		\[U_1\;=\;\int_0^1q\,\E(Z-P-\alpha)^+dq \;\ge\;\int_0^1q\,(\E(Z-P)-\alpha)^+dq\;=\;\int_0^1q\,(qm-\alpha)^+dq, \]
		which gives \eqref{eq:budget}.
		
		For $m\le\alpha$ the integral in \eqref{eq:budget} vanishes, so
		$J(a)\ge\frac\alpha2-\frac m6+2U_0\ge\frac\alpha3+2U_0$. For $m>\alpha$,
		\[
		\int_0^1q\,(qm-\alpha)^+dq\;=\;\int_{\alpha/m}^1q\,(qm-\alpha)\,dq
		\;=\;\frac m3-\frac\alpha2+\frac{\alpha^3}{6m^2},
		\]
		so \eqref{eq:budget} implies
		
		\begin{equation}\label{eq:lbongm}
			J(a)\;\ge\;\frac\alpha2-\frac m6+\frac {2m}{3}-\alpha+\frac{\alpha^3}{3m^2}+2U_0=g(m)+2U_0.
		\end{equation} 
		
		As $g''(m)=\frac{2\alpha^3}{m^4}>0$, the function $g$ is convex on $(\alpha,\infty)$. In addition, as $g'(m)=\frac12-\frac{2\alpha^3}{3m^3}$,
		it has a unique minimizer $m^*=(4/3)^{1/3}\alpha>\alpha$, where $\frac{\alpha^3}{3m^{*2}}=\frac{m^*}{4}$ gives
		\[J(a)\;\geq\; g(m^*)+2U_0\;=\;\frac{3m^*}{4}-\frac\alpha2+2U_0
		\;=\;\bigl(\tfrac34(4/3)^{1/3}-\tfrac12\bigr)\alpha+2U_0\;=\;\nu\alpha+2U_0.\]
		Since $\nu<\frac13$, we have $J(a)\ge\nu\alpha+2U_0$ in both cases. In addition, as $\nu\alpha_c=\frac{\sqrt2}{6}$ and $U_0\geq 0$, $J(a)\ge\frac{\sqrt2}{6}$ whenever $\alpha\ge\alpha_c$, which proves (i).
		
		For (ii), since $\alpha\geq 1/\sqrt{2}$, by (i), we have 
		\[J(a) \;\ge\;\nu\alpha+2U_0\;\ge\;\frac{\nu}{\sqrt2}+2U_0\;=\;\frac{\sqrt2}{6}+2(U_0-\beta_0),\] 
		which proves the first claim, and shows that $U_0\ge\beta_0$ forces $J(a)\ge\frac{\sqrt2}{6}$. Finally, suppose $m\ge0.87$ and $\alpha\le\alpha_c$. Note that then
		$m\ge0.87>\tfrac34>\alpha_c\ge\alpha$, so by \eqref{eq:lbongm} and the fact that $U_0\geq 0$, we have $J(a)\ge g(m)+2U_0\ge g(m)$. We bound $g(m)$ from below in two steps, first
		in $m$ and then in $\alpha$. First, since
		$(4/3)^{1/3}<\frac98$ (indeed,
		$\bigl(\frac98\bigr)^3=\frac{729}{512}>\frac43$), we have
		$m^*=(4/3)^{1/3}\alpha\le\frac98\cdot\frac34=\frac{27}{32}<0.87\le m$. As $g$ is convex on $(\alpha,\infty)$ with minimizer $m^*$, it is nondecreasing on $[m^*,\infty)$, and therefore $g(m)\ge g(0.87)$.
		Second, at $m=0.87$ we regard
		$g(0.87)=\frac{0.87}{2}-\frac\alpha2+\frac{\alpha^3}{3\cdot0.87^2}$ as a function of $\alpha$, and its derivative
		$-\frac12+\frac{\alpha^2}{0.87^2}\ge-\frac12+\frac{1/2}{0.87^2}>0$ for $\alpha\ge\frac1{\sqrt2}$. Thus, on
		$[\frac1{\sqrt2},\alpha_c]$ it is minimized at
		$\alpha=\frac1{\sqrt2}$ and therefore
		\[
		g(0.87)\;\geq \;0.435-\frac{\sqrt2}{4}+\frac{\sqrt2}{12\cdot0.87^2}
		\;\ge\;0.435-0.3536+0.1557\;=\;0.2371\;>\;\frac{\sqrt2}{6}.
		\]
		Hence $J(a)\ge g(m)>\frac{\sqrt2}{6}$, completing the proof of (ii).
	\end{proof}

	\section{Resolution of the residual vectors: $\bar\kappa=\sqrt2/6$}
	\label{sec:residual}
	
	Throughout this section $a$ is a \emph{residual} vector: a
	dominated unit vector in standard form which is covered by none of
	Theorem~\ref{thm:onesigned} and Corollary~\ref{cor:floor}(i).  Explicitly,
	\[
	r\;\ge\;1\qquad\text{and}\qquad
	\alpha\;\in\;\bigl[\tfrac1{\sqrt2},\,\alpha_c\bigr]\subseteq\bigl[\tfrac{1}{\sqrt{2}},\tfrac{3}{4}\bigr].
	\]
	Indeed, if $\max_j|a_j|\le\frac1{\sqrt2}$ then
	Corollary~\ref{prop:nondom} (with $t_0=0$) gives
	$J(a)\ge\frac{\sqrt2}{6}$. Theorem~\ref{thm:onesigned} covers
	dominated vectors with $r=0$, and Corollary~\ref{cor:floor}(i)
	covers $\alpha\ge\alpha_c$. Hence, in order to prove
	$\bar\kappa\ge\frac{\sqrt2}{6}$, it suffices to show
	$J(a)\ge\frac{\sqrt2}{6}$ for every residual vector.
	We may and do further assume
	\begin{equation}\label{eq:mwindow}
		m\;>\;3\alpha-\sqrt2\;\;(\geq \alpha),
	\end{equation}
	since otherwise Lemma~\ref{lem:mixedid} with $U_0,U_1\ge0$ gives $J(a)\ge\frac\alpha2-\frac m6\ge\frac{\sqrt2}{6}$ directly.
	
	We next collect all further notation of this section. Recall that by the convention of the standard form, 
	
	\[p_1\ge\dots\ge p_r>0\qquad \text{and} \qquad c_1\ge\dots\ge c_s>0\]
	so that $c_2$ denotes the second largest of the $c_j$ ($c_2:=0$ if $s\leq 1$). Denote
	\[
	d=\alpha-c_1,\qquad m_2=m-c_1,\qquad q_1=\frac{d}{m_2},\qquad
	\tau=2\alpha-\sqrt2,\qquad \delta=\gamma-c_1,
	\]
	and $\gamma_c^2=\gamma^2-c_1^2=\delta(2\gamma-\delta)$, the squared
	$\ell_2$-norm of the coefficient vector of
	\[
	W\;:=\;\sum_{j\ge2}c_jB_j'-P,
	\]
	the sum of all Bernoulli terms of $Z-P$ except that of $c_1$.
	As $\alpha\geq \frac{1}{\sqrt{2}}$, $c_1\le\gamma\le\tfrac1{\sqrt2}\le\alpha$, and $p_1>0$
	forces $c_1<\gamma$. Hence $d>0$ and $\delta>0$, while
	\eqref{eq:mwindow} gives $m_2>d$, so $0<q_1<1$. Further put
	\[
	\psi(t)\;:=\;\frac t2-\frac{t^4}{6}-\frac13,
	\]
	and \[
	I_3\;:=\;\int_{1/\sqrt2}^1q^3(1-q)\,dq=\frac{2\sqrt2-1}{80}, \qquad
	I_2\;:=\;\int_{1/\sqrt2}^1q^2(1-q)\,dq=\frac{7-4\sqrt2}{48}.
	\]
	Finally, we call a function $\Phi\colon[0,1]\to[0,\infty)$ a
	\emph{floor} if $\E|W-t|\ge\Phi(q)$ for every $t\in\R$ and every
	$q\in[0,1]$.
	
	The section provides two tools --- a fluctuation floor lemma and a fluctuation ledger --- which the proof of Theorem~\ref{thm:kbarmain} then combines with the bound of Corollary~\ref{cor:floor}(ii).

	\begin{lemma}\label{lem:ufloor}
		Let $a$ be a residual vector. Then the following hold.
		\begin{itemize}
			\item[\textup{(i)}] $\Phi(q)=\dfrac{\gamma_c}{\sqrt2}\,\min(q,1-q)$
			is a floor.
			\item[\textup{(ii)}] If $\max(p_1,c_2)\le\gamma_c/\sqrt2$, then
			$\Phi(q)=\sqrt2\,q(1-q)\,\gamma_c$ is a floor.
		\end{itemize}
	\end{lemma}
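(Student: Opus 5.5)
Both parts rest on the Fourier bound of Lemma~\ref{lem:fourier} applied to a \emph{normalized} copy of $W$. The coefficient vector of $W$ has $\ell_2$-norm $\gamma_c>0$; note $\gamma_c>0$ because $r\ge1$. Put $\tilde a:=b/\gamma_c$, where $b=(c_2,\dots,c_s,-p_1,\dots,-p_r)$. Then $W=\gamma_c\,\tilde X(q)$ with $\tilde X=\sum_j\tilde a_j\tilde B_j(q)$, and for any $t\in\R$,
\[
\E|W-t|=\gamma_c\,\E|\tilde X(q)-t/\gamma_c|\;\ge\;\gamma_c\sum_{j:\tilde a_j\ne0}\tilde a_j^2F_q(\tilde a_j^{-2})
\]
by the first part of Lemma~\ref{lem:fourier}, which allows an arbitrary shift $t_0$. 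So in each part it suffices to bound $F_q(s)$ from below for the relevant range $s=\tilde a_j^{-2}\ge1$.

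\emph{Part (ii).} The hypothesis $\max(p_1,c_2)\le\gamma_c/\sqrt2$ says precisely that every $|\tilde a_j|\le1/\sqrt2$, so every $\tilde a_j^{-2}\ge2$. This is exactly Corollary~\ref{prop:nondom} applied to the unit vector $\tilde a$ with shift $t_0=t/\gamma_c$. It gives $\E|\tilde X-t_0|\ge\sqrt2\,q(1-q)$, and multiplying by $\gamma_c$ yields the claimed floor. No further work is needed here.

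\emph{Part (i).} Now some coordinates may have $s_j\in[1,2)$, so I need a uniform bound $F_q(s)\ge\frac{1}{\sqrt2}\min(q,1-q)$ for all $s\ge1$. Summing it with weights $\tilde a_j^2$ (total weight $1$) and multiplying by $\gamma_c$ then gives the floor.

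For $s\ge2$ this follows from Lemma~\ref{lem:fourier}, since $\sqrt2\,q(1-q)\ge\frac1{\sqrt2}\min(q,1-q)$ is equivalent to $2\max(q,1-q)\ge1$, which always holds.

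For $s\in[1,2)$ I cannot invoke Lemma~\ref{lem:known}(ii) directly, because its range is $\sigma\ge1$, i.e.\ $s\ge2$. I would try two routes, in this order:
\begin{enumerate}
\item[(a)] \emph{Monotonicity.} Use $r_q\le1$: for $s\ge1$ one has $r_q(u)^s\le r_q(u)$, so $1-r_q(t/\sqrt s)^s\ge1-r_q(t/\sqrt s)$. Substituting $t=\sqrt s\,u$ gives $F_q(s)\ge\sqrt s\,F_q(1)\ge F_q(1)$. Here $F_q(1)=\frac2\pi\int_0^\infty(1-|\phi(u)|)u^{-2}\,du$.
\item[(b)] \emph{Evaluating $F_q(1)$.} Since $|\phi|$ is not a characteristic function, I would not try to compute $F_q(1)$ exactly. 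Instead I would bound it via $1-\sqrt{x}\ge(1-x)/2$ with $x=r_q^2$:
\[
1-r_q(u)\;\ge\;\tfrac12\bigl(1-\phi_\zeta(u)\bigr).
\]
Lemma~\ref{lem:known}(i) applied to $\zeta$ then gives $F_q(1)\ge\frac12\E|\zeta|=q(1-q)$.
\end{enumerate}
Since $q(1-q)\ge\frac12\min(q,1-q)$, and $\frac12<\frac1{\sqrt2}$, this falls short by a factor of $\sqrt2$. To close the gap I would sharpen step (a) using $\sqrt s$ rather than $1$: then $F_q(s)\ge\sqrt s\,q(1-q)$. Moreover, for $s$ near $2$ I would interpolate with Lemma~\ref{lem:known}(ii) via a Hölder/log-convexity argument in the exponent, namely $r^s\le (r^2)^{s-1}r^{2-s}$ type splitting.

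Failing that, I would bound $1-r_q$ more carefully. Write $1-r=(1-r^2)/(1+r)$ and use $1+r\le2$, but also $r\le1$ on the relevant range. Alternatively, compute $\frac2\pi\int(1-|1-q+qe^{iu}|)u^{-2}du$ directly through its Fourier series, which should be $\ge\min(q,1-q)\cdot c$ with $c\ge1/\sqrt2$; the natural benchmark is that the degenerate single-atom case gives $\E|B-t|\ge\min(q,1-q)$.

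\emph{Main obstacle.} The main obstacle is this constant for $s\in[1,2)$: getting $F_q(s)\ge\frac1{\sqrt2}\min(q,1-q)$ rather than only $\frac12$ of it. I expect it to need either the $\sqrt s$ gain combined with the Havrilla--Tkocz monotonicity, or a direct estimate of $F_q(1)$.
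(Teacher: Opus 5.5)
\textbf{Part (ii) is fine; part (i) has a genuine gap.} Your part (ii) is correct and is exactly the paper's argument: Corollary~\ref{prop:nondom} applied to the normalized vector with shift $t/\gamma_c$.

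Part (i) cannot be completed along the route you propose. The issue is not a missing estimate of $F_q(s)$ for $s\in[1,2)$. The bound $\E|W-t|\ge\gamma_c\sum_j\tilde a_j^2F_q(\tilde a_j^{-2})$ is itself too weak when one coefficient of $W$ carries almost all of $\gamma_c^2$.

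\textbf{A concrete failure.} If $W$ has a single coefficient (e.g.\ a residual vector with $r=s=1$, so $W=-p_1B_1$), then $\tilde a=(\pm1)$ and your bound reads $\gamma_cF_q(1)$. At $q=\tfrac12$ one has $r_{1/2}(u)=|\cos(u/2)|$, and hence
\[
F_{1/2}(1)=\tfrac1\pi\int_0^\infty(1-|\cos v|)v^{-2}\,dv=\tfrac1\pi\approx0.318 .
\]
The required floor is $\tfrac1{\sqrt2}\min(q,1-q)=\tfrac1{2\sqrt2}\approx0.354$, so the bound falls short. Since $F_q$ is uniformly bounded on $[1,\infty)$ and continuous at $s=1$, the same shortfall persists whenever the other coefficients of $W$ are tiny. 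The loss occurs in the step $\operatorname{Re}\phi_{X-t_0}\le|\phi_X|$ of Lemma~\ref{lem:fourier}, which is far from tight for a single Bernoulli variable: the true value is $\min_{t_0}\E|B-t_0|=\tfrac12$ at $q=\tfrac12$. So neither a direct estimate of $F_q(1)$ nor an interpolation with Havrilla--Tkocz can close the gap.

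\textbf{A computational slip.} Your ``$\sqrt s$ gain'' is wrong: substituting $t=\sqrt s\,u$ gives $F_q(s)\ge F_q(1)/\sqrt s$, not $\sqrt s\,F_q(1)$.

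\textbf{The missing idea.} Drop the Fourier bound in the dominated case and split according to the largest coefficient of $W$.
\begin{itemize}
\item If $\max(p_1,c_2)\le\gamma_c/\sqrt2$, your part (ii) already gives $\sqrt2\,q(1-q)\gamma_c\ge\frac{\gamma_c}{\sqrt2}\min(q,1-q)$, since $2q(1-q)\ge\min(q,1-q)$.
\item Otherwise some coefficient $u_{j^*}$ of $W$ satisfies $|u_{j^*}|>\gamma_c/\sqrt2$. Condition on all the other Bernoulli variables. Then $\E|W-t|$ becomes an average of quantities $\E|u_{j^*}\beta-t'|$ with $t'$ fixed, and by the triangle inequality
\[
(1-q)|t'|+q|u_{j^*}-t'|\ge\min(q,1-q)\,|u_{j^*}|>\tfrac{\gamma_c}{\sqrt2}\min(q,1-q).
\]
Averaging over the conditioning gives the floor.
\end{itemize}
This is precisely the single-atom benchmark $\E|B-t|\ge\min(q,1-q)$ that you mention at the end, applied directly rather than through $F_q$. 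The constant $\tfrac1{\sqrt2}$ in (i) comes from the threshold of this case split, not from any Fourier constant.
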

	
	\begin{proof}
		Denote by $u_1,\dots,u_n$ (with $n=r+s-1$) the coefficients
		$c_2,\dots,c_s,-p_1,\dots,-p_r$ of $W$, and
		$\beta_1,\dots,\beta_n$ for the corresponding independent
		Bernoulli($q$) variables, so that $W=\sum_ju_j\beta_j$ and
		$\sum_ju_j^2=\gamma_c^2$, with $\gamma_c>0$ since $\gamma>c_1$ (discussed in the section preamble). Fix $t\in\R$ and $q\in[0,1]$; we now bound $\E|W-t|$ from
		below in two complementary situations.
		
		Suppose first that $\max(p_1,c_2)=\max_j |u_j|\le\gamma_c/\sqrt2$. Then
		$u/\gamma_c$ is a unit vector with no coordinate exceeding
		$\frac1{\sqrt2}$ in absolute value, so
		Corollary~\ref{prop:nondom}, applied to $u/\gamma_c$ with the
		shift $t_0=t/\gamma_c$, gives
		\[
		\E|W-t|\;=\;\gamma_c\,
		\E\Bigl|\sum_j\frac{u_j}{\gamma_c}\,\beta_j-\frac t{\gamma_c}\Bigr|
		\;\ge\;\sqrt2\,q(1-q)\,\gamma_c
		\;\ge\;\frac{\gamma_c}{\sqrt2}\,\min(q,1-q),
		\]
		where the last inequality holds as $2q(1-q)\ge\min(q,1-q)$: for $q\le\frac12$ one has $2q(1-q)\ge2q\cdot\frac12=q$, and symmetrically for $q\ge\frac12$. In particular, this completes the proof of (ii), and also proves (i) in this situation. It remains to prove (i) in the complementary situation.
		
		Suppose now that there is an index $j^*$ such that $|u_{j^*}|>\gamma_c/\sqrt2$. Condition on $(\beta_j)_{j\ne j^*}$: the
		conditional expectation of $|W-t|$ equals
		$\E|u_{j^*}\beta_{j^*}-t'|$, where
		$t'=t-\sum_{j\ne j^*}u_j\beta_j$ is a constant under the
		conditioning. For all $c,t'\in\R$, the triangle inequality
		$|t'|+|c-t'|\ge|c|$ gives
		\[
		\E|c\,\beta_{j^*}-t'|\;=\;(1-q)\,|t'|+q\,|c-t'|
		\;\ge\;\min(q,1-q)\,\bigl(|t'|+|c-t'|\bigr)
		\;\ge\;\min(q,1-q)\,|c| .
		\]
		With $c=u_{j^*}$ this is at least
		$\frac{\gamma_c}{\sqrt2}\min(q,1-q)$. Thus, taking the expectation over the conditioning variables completes the proof of (i).
	\end{proof}

	\begin{lemma}\label{lem:ledger}
		Let $a$ be residual and let $\Phi$ be any floor. Then, with $\psi$, $\tau$, $m_2$ and $q_1$ as in the section preamble,
		\[
		J(a)-\frac{\sqrt2}{6}\;\ge\;\frac\tau6-m_2\,\psi(q_1)
		+\int_0^1q^2\bigl(\Phi(q)-m_2|q-q_1|\bigr)^+dq .
		\]
	\end{lemma}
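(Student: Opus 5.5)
The plan is to start from the exact identity of Lemma~\ref{lem:mixedid}, $J(a)=\frac\alpha2-\frac m6+2(U_0+U_1)$, and simply discard $U_0\ge0$. All the work then goes into a lower bound on $U_1=\int_0^1q\,\E(Z-P-\alpha)^+dq$ that isolates the heaviest negative coordinate $c_1$.

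First I condition on the Bernoulli variable $B_1'$ attached to $c_1$. Since $(\cdot)^+\ge0$, dropping the event $\{B_1'=0\}$ gives
\[
\E(Z-P-\alpha)^+\;\ge\;q\,\E(c_1+W-\alpha)^+\;=\;q\,\E(W-d)^+,
\]
where $W$ is independent of $B_1'$ and $d=\alpha-c_1$. Next I use $x^+=\frac12(|x|+x)$ together with $\E W=q(L-c_1-R)=q\,m_2$. This gives
\[
\E(W-d)^+=\tfrac12\bigl(\E|W-d|+m_2(q-q_1)\bigr).
\]
For the absolute moment I have two lower bounds. The floor property with $t=d$ gives $\E|W-d|\ge\Phi(q)$, and Jensen gives $\E|W-d|\ge|\E W-d|=m_2|q-q_1|$. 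Hence
\[
\E|W-d|\;\ge\;\max\bigl(\Phi(q),\,m_2|q-q_1|\bigr)\;=\;m_2|q-q_1|+\bigl(\Phi(q)-m_2|q-q_1|\bigr)^+ .
\]
Substituting, and using $\tfrac12(|x|+x)=x^+$ with $m_2>0$, I get
\[
\E(W-d)^+\;\ge\;m_2(q-q_1)^++\tfrac12\bigl(\Phi(q)-m_2|q-q_1|\bigr)^+ .
\]
Multiplying by $q^2$ and integrating then yields
\[
2U_1\;\ge\;2m_2\int_0^1q^2(q-q_1)^+dq+\int_0^1q^2\bigl(\Phi(q)-m_2|q-q_1|\bigr)^+dq .
\]

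What remains is bookkeeping, and I expect the only delicate step to be getting this to match $\psi$ exactly. Since $0<q_1<1$ (from the section preamble), the computation in the proof of Theorem~\ref{thm:onesigned} gives $\int_0^1q^2(q-q_1)^+dq=\frac{3-4q_1+q_1^4}{12}$. I then rewrite everything using $\alpha-m=d-m_2=m_2(q_1-1)$ and $\frac\tau6=\frac\alpha3-\frac{\sqrt2}6$. This should give
\[
\frac\alpha2-\frac m6-\frac{\sqrt2}6+\frac{m_2(3-4q_1+q_1^4)}6
=\frac\tau6+\frac{\alpha-m}6+\frac{m_2(3-4q_1+q_1^4)}6
=\frac\tau6+m_2\,\frac{2-3q_1+q_1^4}{6}.
\]
Since $-\psi(q_1)=\frac{2-3q_1+q_1^4}{6}$, the right-hand side is $\frac\tau6-m_2\psi(q_1)$, which is exactly the claimed ledger inequality. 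I will check this last identity carefully, since the lemma asserts it on the nose.
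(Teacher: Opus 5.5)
Your proof is correct and follows essentially the paper's argument. You condition on $B_1'$, bound $\E|W-d|$ below by $\max(\Phi(q),m_2|q-q_1|)$ via the floor and Jensen, convert with $x^+=\frac12(|x|+x)$, and carry out the same bookkeeping to reach $\frac\tau6-m_2\psi(q_1)$. The only difference is that you discard the $\{B_1'=0\}$ branch outright, while the paper keeps it through $\E|W-\alpha|\ge\alpha-qm_2$ and then cancels the $\pm c_1(1-q)$ terms. That bound is equivalent to $\E(W-\alpha)^+\ge0$, so you lose nothing and your route is slightly cleaner.
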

	
	\begin{proof}
		Write $V:=Z-P$, so that $V=c_1B_1'+W$ with $B_1'$
		independent of $W$, and recall from the preamble that
		$\E W=q(m-c_1)=qm_2$, that $m_2q_1=d=\alpha-c_1$, and that
		$U_1=\int_0^1q\,\E(V-\alpha)^+dq$.
		
		We first bound $\E|V-\alpha|$ from below. Conditioning on
		$B_1'$,
		\[
		\E|V-\alpha|\;=\;(1-q)\,\E|W-\alpha|+q\,\E|W-d| .
		\]
		For the first term, as $|W-\alpha|\ge\alpha-W$,
		$\E|W-\alpha|\ge\alpha-qm_2$. The second term admits two lower
		bounds simultaneously: Jensen's inequality gives
		$\E|W-d|\ge|\E W-d|=|qm_2-d|=m_2|q-q_1|$, and the definition
		of a floor gives $\E|W-d|\ge\Phi(q)$. Hence, by
		$\max(y,\Phi)=y+(\Phi-y)^+$,
		\[
		\E|W-d|\;\ge\;\max\bigl(m_2|q-q_1|,\Phi(q)\bigr)
		\;=\;m_2|q-q_1|+\bigl(\Phi(q)-m_2|q-q_1|\bigr)^+,
		\]
		and altogether
		\[
		\E|V-\alpha|\;\ge\;(1-q)(\alpha-qm_2)+q\,m_2|q-q_1|
		+q\bigl(\Phi(q)-m_2|q-q_1|\bigr)^+ .
		\]
		
		Next we pass from the absolute value to the positive part.
		Since $x^+=\frac12(|x|+x)$ and $\E(V-\alpha)=qm-\alpha$,
		\[
		\E(V-\alpha)^+\;=\;\tfrac12\bigl(\E|V-\alpha|+qm-\alpha\bigr).
		\]
		From $m_2q_1=\alpha-c_1$ we obtain the two identities
		\[
		qm-\alpha\;=\;m_2(q-q_1)-c_1(1-q),\qquad
		(1-q)(\alpha-qm_2)\;=\;(1-q)\,m_2(q_1-q)+c_1(1-q),
		\]
		and when they are added the terms $\pm c_1(1-q)$ cancel
		exactly, and thus we have
		\begin{eqnarray*}
			(1-q)(\alpha-qm_2)+q\,m_2|q-q_1|+qm-\alpha
			\;&=&\;m_2\bigl[(1-q)(q_1-q)+q|q-q_1|+(q-q_1)\bigr]\\
			\;&=&\;m_2\,q\,\big[(q-q_1)+|q-q_1|\bigr]\;=\;2q\,m_2(q-q_1)^+ .
		\end{eqnarray*}
		Combining all of them, we have
		\[
		\E(V-\alpha)^+\;\ge\;q\,m_2(q-q_1)^+
		+\tfrac q2\bigl(\Phi(q)-m_2|q-q_1|\bigr)^+ .
		\]
		
		Multiplying by $2q$ and integrating over $q\in[0,1]$ yields
		\[
		2U_1\;\ge\;2m_2h(q_1)
		+\int_0^1q^2\bigl(\Phi(q)-m_2|q-q_1|\bigr)^+dq,
		\]
		where 
		$h(q_1):=\int_{q_1}^1q^2(q-q_1)\,dq
		=\frac14-\frac{q_1}3+\frac{q_1^4}{12}$ (note that as $0<q_1<1$, the integral of $q^2(q-q_1)^+$ over $[0,1]$ is
		indeed $h(q_1)$).
		
		Finally we assemble. By Lemma~\ref{lem:mixedid} and the fact that $U_0\ge0$,
		\begin{eqnarray*}
			J(a)-\frac{\sqrt2}{6}\;&\ge&\;\frac\alpha2-\frac m6
			-\frac{\sqrt2}{6}+2U_1\\
			\;&\geq& \;\frac{3\alpha-m-\sqrt2}{6}+2m_2h(q_1)
			+\int_0^1q^2\bigl(\Phi(q)-m_2|q-q_1|\bigr)^+dq\\
			\;&=&\;\frac{\tau+\alpha-m}{6}+2m_2h(q_1)+\int_0^1q^2\bigl(\Phi(q)-m_2|q-q_1|\bigr)^+dq\\
			\;&=&\;\frac{\tau-m_2(1-q_1)}{6}+2m_2h(q_1)+\int_0^1q^2\bigl(\Phi(q)-m_2|q-q_1|\bigr)^+dq\\
			\;&=&\;\frac{\tau}{6}-m_2\psi(q_1)+\int_0^1q^2\bigl(\Phi(q)-m_2|q-q_1|\bigr)^+dq,
		\end{eqnarray*}
		where the second last equality uses $m_2(1-q_1)=m_2-d=m-\alpha$, and the last one uses
		$\frac{1-q_1}{6}-2h(q_1)=\frac{q_1}2-\frac{q_1^4}{6}-\frac13=\psi(q_1)$.
		This completes the proof.
	\end{proof}
	
	The deficit $m_2\psi(q_1)$ and the drift term of the fluctuation
	integral combine into an affine function of $q_1$ --- the quartic
	terms cancel exactly --- and this is what closes the proof of the main
	theorem below.

	\begin{theorem}\label{thm:kbarmain}
		$\bar\kappa=\dfrac{\sqrt2}{6}$. 
	\end{theorem}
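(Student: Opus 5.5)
The upper bound $\bar\kappa\le\frac{\sqrt2}{6}$ was already noted in the section opening via $a=(\frac1{\sqrt2},-\frac1{\sqrt2})$. So only $J(a)\ge\frac{\sqrt2}{6}$ for residual vectors remains. Here $r\ge1$, $\alpha\in[\frac1{\sqrt2},\alpha_c]$, and $m>3\alpha-\sqrt2$ by \eqref{eq:mwindow}. The plan is to feed Lemma~\ref{lem:ledger} with a floor from Lemma~\ref{lem:ufloor} and show the right-hand side is nonnegative. When it is not, we fall back on Corollary~\ref{cor:floor}(ii), either through $U_0\ge\beta_0$ (with $U_0$ bounded below by Lemma~\ref{lem:mixedid}, using $r\ge1$ and $m>0$) or through $m\ge0.87$.

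The case split will be according to which floor is available and how large $p_1$ is.

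\textbf{Case A: $p_1$ is not small relative to $m$.} Here $\theta=p_1/(m+p_1)$ is bounded below. Then Lemma~\ref{lem:mixedid} gives $U_0\ge(m+p_1)(\theta^3/6-\theta^4/12)\ge\beta_0$, since $\beta_0<0.003$ is tiny, and Corollary~\ref{cor:floor}(ii) closes the case. Large $m$ is handled the same way by the $m\ge0.87$ clause.

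\textbf{Case B: $\max(p_1,c_2)\le\gamma_c/\sqrt2$.} Use the floor $\Phi(q)=\sqrt2\,q(1-q)\gamma_c$ of Lemma~\ref{lem:ufloor}(ii).

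\textbf{Case C: otherwise.} Use $\Phi(q)=\frac{\gamma_c}{\sqrt2}\min(q,1-q)$ of Lemma~\ref{lem:ufloor}(i).

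In Cases B and C, lower-bound the fluctuation integral $\int_0^1 q^2(\Phi(q)-m_2|q-q_1|)^+dq$ by restricting to a window around $q_1$, or to $[1/\sqrt2,1]$, where $\Phi$ is explicit. This is where the constants $I_2$ and $I_3$ should enter. Then use the remark preceding the theorem: combining this term with $-m_2\psi(q_1)$ cancels the quartic parts, leaving an affine function of $q_1$. Its nonnegativity on the admissible range of $q_1$ is checked at the endpoints. The slack $\tau/6=(2\alpha-\sqrt2)/6\ge0$ supplies the remaining margin.

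To make the cases fit together, express $\gamma_c^2=\delta(2\gamma-\delta)$ and $m_2$ in terms of $c_1$. Note that small $\delta$, i.e.\ $c_1$ close to $\gamma$, forces $\gamma_c$ small but also forces $q_1=(\alpha-c_1)/m_2$ small. There $\psi(q_1)\approx q_1/2-1/3<0$, so the deficit term is favorable and no floor is needed. Conversely, large $\delta$ gives a sizable $\gamma_c$, hence a usable floor. The constraint $\sum p_i^2+\sum_{j\ge2}c_j^2=\gamma_c^2$ together with $p_1\ge$ (something) links Cases A and C.

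The main obstacle will be the middle regime, where $q_1$ is moderately large so that $\psi(q_1)>0$, $\gamma_c$ is moderate, and $p_1$ is too small for $U_0\ge\beta_0$. There I would first try to choose the integration window so that the $q^2\cdot m_2|q-q_1|$ drift term produces exactly the quartic $m_2q_1^4/6$ needed for cancellation. Then I would verify the resulting inequality, which is affine in $q_1$ and polynomial in $\alpha,\gamma_c$, over the compact parameter box $\alpha\in[\frac1{\sqrt2},\alpha_c]$ and $m<0.87$.
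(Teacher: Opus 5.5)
\textbf{There is a genuine gap: the proposal stops where the proof's real content begins.} Your skeleton matches the paper's. It has the $U_0\ge\beta_0$ exit for large $p_1$, the $m\ge0.87$ exit, and the no-floor regime where $\psi(q_1)<0$ (the paper's Case~1, $q_1\le\frac1{\sqrt2}$). It uses the same split between the two floors of Lemma~\ref{lem:ufloor}, according to whether $\max(p_1,c_2)\le\gamma_c/\sqrt2$. It also restricts the ledger integral to $[\frac1{\sqrt2},1]$, where the quartic terms cancel.

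After the cancellation, the bound is increasing in $m_2$ at fixed $d=m_2q_1$. So the worst case is $m_2=d$, and what must be shown is $\frac\tau6+\int_{1/\sqrt2}^1q^2\Phi\,dq\ge I_2\,d$. The slack $\tau/6$ cannot supply this. When $c_1$ is small, $d=\alpha-c_1$ is of order $\gamma$, while $\tau\to0$ as $\alpha\downarrow\frac1{\sqrt2}$. The missing idea is to split $d=(\alpha-\gamma)+\delta$:
\begin{itemize}
\item the mean value theorem on $[\frac1{\sqrt2},\frac34]$ gives $\alpha-\gamma\le\frac54\tau$;
\item since $\frac54I_2<\frac16$, the slack $\tau/6$ absorbs the first part;
\item the floor must then cover only $I_2\delta$, which is \eqref{eq:drift} and \eqref{eq:handcore}.
\end{itemize}
Case~B then closes via $\gamma_c^2=\delta(2\gamma-\delta)\ge\gamma\delta$ together with $\sqrt2\,I_3\ge I_2$.

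\textbf{Your picture of which regimes are dangerous is inverted.} Small $\delta$ does not force small $q_1$. With $\alpha=\frac1{\sqrt2}+\epsilon$, $\delta=10\epsilon$ and $m$ just above $3\alpha-\sqrt2$, one gets $q_1\approx\frac67>\frac1{\sqrt2}$. Small $\delta$ is harmless for a different reason: $\gamma_c\ge\sqrt{\gamma\delta}\gg\delta$.

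The dangerous end is large $\delta$. Floor (i) contributes only $\frac{\gamma_c}{\sqrt2}I_2$, and this beats $I_2\delta$ iff $\delta<\frac{2\gamma}3$. As $c_1\to0$ we have $\delta,\gamma_c\to\gamma$, so floor (i) is not usable there. For instance, at $\alpha\downarrow\frac1{\sqrt2}$, $c_1\to0$, $m=0.75$, even the full ledger of Lemma~\ref{lem:ledger} with floor (i) is negative, so no choice of window rescues it. Case~C must therefore show that its own hypothesis forces $\delta<\frac{2\gamma}3$:
\begin{itemize}
\item If $c_2>\gamma_c/\sqrt2$, this follows from $c_1\ge c_2$, which gives $3\delta^2-6\gamma\delta+2\gamma^2>0$.
\item If $p_1>\gamma_c/\sqrt2$, it needs an \emph{absolute} upper bound on $p_1$ coming from the failure of Case~A. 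So Case~A must carry a concrete threshold that serves both purposes. The paper uses $p_1\ge\frac13$: together with $m<0.87$ this gives $\theta\ge\frac{10}{37}$ and $U_0>0.003>\beta_0$. Its complement gives $\gamma_c^2<2p_1^2<\frac29<\frac{8\gamma^2}9$.
\end{itemize}
A verification over $\alpha\in[\frac1{\sqrt2},\alpha_c]$, $m<0.87$ that does not impose this Case~A/Case~C constraint would fail on part of the box.
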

	
	\begin{proof}
		As explained at the beginning of this section, it suffices to show $J(a)\ge\frac{\sqrt2}{6}$ for every residual vector $a=(\alpha,p_1,\dots,p_r,-c_1,\dots,-c_s)$. Also as explained there, we may assume that $a$ satisfies \eqref{eq:mwindow}. Moreover, since $m>\alpha$ and $\alpha\le\alpha_c$, Corollary~\ref{cor:floor}(ii) allows us to further assume $m<0.87$.
		
		As $r\geq 1$, $p_1>0$. Suppose first that $p_1\ge\frac13$. Then, by Lemma~\ref{lem:mixedid}, we have
		\[U_0\;\ge\;(m+p_1)\Bigl(\frac{\theta^3}{6}-\frac{\theta^4}{12}\Bigr)\;=\;\frac{p_1}{12}\,\theta^2(2-\theta),
		\qquad \theta=\frac{p_1}{m+p_1}.\]
		Since $\frac{d}{d\theta}\,\theta^2(2-\theta)=\theta(4-3\theta)>0$ on
		$(0,1)$, the lower bound is increasing in $\theta=\frac{p_1}{m+p_1}$.  As $m<0.87$ and $p_1\geq 1/3$, $\theta\geq \frac{1/3}{0.87+1/3}\geq  \frac{1/3}{9/10+1/3}=\frac{10}{37}$ and therefore
		
		\[U_0\ge\frac1{36}\,\theta^2(2-\theta)\geq \frac1{36}\,\left(\frac{10}{37}\right)^2\left(\frac{64}{37}\right) =\frac{1600}{9\cdot 37^3}\ge\frac{1}{9\cdot 37}>0.003\geq \;\beta_0;\]
		Corollary~\ref{cor:floor}(ii) then gives $J(a)\ge\frac{\sqrt2}{6}$.
		We may therefore assume $p_1<\frac13$.
		
		{\bf Case 1: $q_1\le\frac1{\sqrt2}$.} Since
		$\psi'(t)=\frac12-\frac23t^3>0$ for $t^3<\frac34$, $\psi$ increases on $[0,\tfrac1{\sqrt2}]$. Thus,
		$\psi(q_1)\le\psi(\tfrac1{\sqrt2})=\frac{2\sqrt2-3}{8}<0$, and as $m_2>0$, Lemma~\ref{lem:ledger} with the floor $\Phi=0$ gives
		
		\[J(a)-\frac{\sqrt2}{6}\ge\frac\tau6+m_2\,\frac{3-2\sqrt2}{8}\;>\;0,\]
		as $\tau\ge0$ and $m_2>0$. This completes the proof of Case 1.
		
		Thus, from now on we assume that $q_1>\frac1{\sqrt2}$. Recall
		$I_3$ and $I_2$ from the section preamble. Discarding the positive
		part of the integrand on $[\tfrac1{\sqrt2},1]$ (as $y^+\ge y$) and
		the integrand on the complement (as $y^+\ge0$),
		Lemma~\ref{lem:ledger} gives, for any floor $\Phi$,
		\begin{eqnarray*}
			J-\frac{\sqrt2}{6}\;&\ge&\;\frac\tau6-m_2\,\psi(q_1)
			+\int_{1/\sqrt{2}}^1q^2\bigl(\Phi(q)-m_2|q-q_1|\bigr)dq\\
			\;&\ge&\;\frac\tau6
			+\int_{1/\sqrt{2}}^1q^2\Phi(q)dq-m_2\left(\int_{1/\sqrt{2}}^1q^2|q-q_1|dq+\psi(q_1)\right)\\
			\; &=& \;\frac\tau6
			+\int_{1/\sqrt{2}}^1q^2\Phi(q)dq-m_2\left(\int_{q_1}^1q^2(q-q_1)dq+\int^{q_1}_{1/\sqrt{2}}q^2(q_1-q)dq+\psi(q_1)\right)\\
			\; &=& \;\frac\tau6
			+\int_{1/\sqrt{2}}^1q^2\Phi(q)dq\\
			&&{}-m_2\left(\left(\frac{1}{4}-\frac{q_1}{3}+\frac{q_1^4}{12}\right)+\left(\frac{q_1^4}{12}+\frac{1}{16}-\frac{\sqrt{2}q_1}{12}\right)+\left(\frac{q_1}{2}-\frac{q_1^4}{6}-\frac13\right)\right)\\
			\; &=& \;\frac\tau6
			+\int_{1/\sqrt{2}}^1q^2\Phi(q)dq-m_2q_1\left(\frac16-\frac{\sqrt2}{12}\right)+\frac{m_2}{48}\\
			\;&\ge&\;\frac\tau6+\int_{1/\sqrt2}^1q^2\,\Phi(q)\,dq
			-I_2\,d,
		\end{eqnarray*}
		where the third line splits the integral at $q_1$ (recall
		$\frac1{\sqrt2}<q_1<1$), the fourth line evaluates the three terms,
		the fifth line collects them (the quartic terms cancel), and the
		last inequality uses $m_2q_1=d$, $m_2\ge d$ and
		$\frac16-\frac{\sqrt2}{12}-\frac1{48}=\frac{7-4\sqrt2}{48}=I_2$.
		We complement this bound with the following estimate of its drift
		term.
		
		\begin{equation}\label{eq:drift}
			\dfrac\tau6-I_2\,d\;\ge\;-I_2\,\delta.
		\end{equation}
		
		Indeed, the function $\alpha\mapsto\alpha-\gamma=\alpha-\sqrt{1-\alpha^2}$
		vanishes at $\alpha=\frac1{\sqrt2}$, and its derivative
		$1+\frac\alpha\gamma$ is increasing in $\alpha$ (as $\gamma$
		decreases in $\alpha$). Thus, as  $\alpha\leq \alpha_c\leq 3/4$ it is
		at most $1+\frac{3/4}{\sqrt{1-(3/4)^2}}=1+\frac{3}{\sqrt{7}}\leq \frac{5}{2}$. By the mean value theorem,
		$\alpha-\gamma\le\frac{5}{2}\bigl(\alpha-\frac1{\sqrt2}\bigr) =\frac{5}{4}\,\tau$, and therefore $d-\delta=\alpha-\gamma\le \frac{5}{4}\,\tau$. Since
		$\frac{5}{4}\,I_2\leq 2 I_2=\frac{7-4\sqrt{2}}{24}<\frac16$ and $\tau\ge0$,
		\[
		I_2(d-\delta)\;\leq\; \frac{5}{4}\tau I_2 \;\le\;\frac{\tau}{6},
		\]
		which rearranges to \eqref{eq:drift}. Combining the displayed chain with \eqref{eq:drift}, for every floor
		$\Phi$, we have
		\begin{equation}\label{eq:handcore}
			J(a)-\frac{\sqrt2}{6}\;\ge\;
			\int_{1/\sqrt2}^1q^2\,\Phi(q)\,dq\;-\;I_2\,\delta .
		\end{equation}
		
		The two remaining cases apply \eqref{eq:handcore} with the two floors provided by Lemma~\ref{lem:ufloor}.
		
		{\bf Case 2: $q_1>\frac1{\sqrt2}$ and
			$\max(p_1,c_2)\le\gamma_c/\sqrt2$.} By
		Lemma~\ref{lem:ufloor}(ii), $\Phi=\sqrt2\,q(1-q)\,\gamma_c$ is a
		floor, and
		\[
		\int_{1/\sqrt2}^1q^2\,\Phi(q)\,dq
		\;=\;\sqrt2\,\gamma_c\int_{1/\sqrt2}^1q^3(1-q)\,dq
		\;=\;\sqrt2\,\gamma_c\,I_3 ,
		\]
		so \eqref{eq:handcore} gives
		$J(a)-\frac{\sqrt2}{6}\ge\sqrt2\,\gamma_c\,I_3-I_2\,\delta$. We
		show that $\sqrt2\,\gamma_c\,I_3\ge I_2\,\delta$, which completes
		this case. Since $0<\delta\le\gamma$ (as $\delta=\gamma-c_1$ and
		$0<c_1<\gamma$), we have
		$\gamma_c^2=\delta(2\gamma-\delta)\ge\delta\gamma$, so
		$\sqrt2\,\gamma_c\,I_3\ge\sqrt{2\gamma\delta}\,I_3$, and after
		squaring it suffices to check
		$2\gamma\delta\,I_3^2\ge I_2^2\,\delta^2$, i.e.\
		$2\gamma\,I_3^2\ge I_2^2\,\delta$. As $\delta\le\gamma$, this
		follows from $2I_3^2\ge I_2^2$, that is, from
		$\sqrt2\,I_3\ge I_2$. To verify the latter, 
		\[
		\sqrt2\,\frac{2\sqrt2-1}{80}\;\ge\;\frac{7-4\sqrt2}{48}
		\iff 48\,(4-\sqrt2)\;\ge\;80\,(7-4\sqrt2)
		\iff 17\sqrt2\;\ge\;23 ,
		\]
		and the last inequality holds since $578\ge529$.

		{\bf Case 3: $q_1>\frac1{\sqrt2}$ and
			$\max(p_1,c_2)>\gamma_c/\sqrt2$.} We first show that
		$\delta<\frac{2\gamma}3$. 
		
		Suppose $p_1>\gamma_c/\sqrt2$. Then $\gamma_c^2<2p_1^2$, and
		$p_1<\frac13$ gives $\gamma_c^2<\frac29$. The map
		$\delta\mapsto\delta(2\gamma-\delta)$ is increasing on
		$[0,\gamma]$ (its derivative is $2(\gamma-\delta)\ge0$), and its
		value at $\delta=\frac{2\gamma}3$ is
		$\frac{2\gamma}3\cdot\frac{4\gamma}3=\frac{8\gamma^2}{9}$. Since
		$\alpha\le\alpha_c\leq 3/4$, we have $\gamma^2=1-\alpha^2\ge1-(3/4)^2=7/16$,
		so $\frac{8\gamma^2}{9}\ge\frac{7}{18}
		>\frac29$. As $\gamma_c^2=\delta(2\gamma-\delta)<\frac29$ and
		$\delta\le\gamma$, monotonicity forces $\delta<\frac{2\gamma}3$.
		
		Suppose instead $c_2>\gamma_c/\sqrt2$. Then
		$c_1\ge c_2>\gamma_c/\sqrt2$, so $c_1^2>\gamma_c^2/2$, i.e.\
		$(\gamma-\delta)^2>\delta(2\gamma-\delta)/2$. Expanding,
		$2\gamma^2-4\gamma\delta+2\delta^2>2\gamma\delta-\delta^2$, i.e.\
		$3\delta^2-6\gamma\delta+2\gamma^2>0$. The roots of
		$3x^2-6\gamma x+2\gamma^2$ are
		$x=\gamma\bigl(1\pm\tfrac1{\sqrt3}\bigr)$, so positivity together
		with $\delta\le\gamma<\gamma\bigl(1+\tfrac1{\sqrt3}\bigr)$ forces
		$\delta<\gamma\bigl(1-\tfrac1{\sqrt3}\bigr)<\frac{2\gamma}3$.
		
		Now, by Lemma~\ref{lem:ufloor}(i),
		$\Phi=\frac{\gamma_c}{\sqrt2}\min(q,1-q)$ is a floor. On
		$[\tfrac1{\sqrt2},1]$ we have $\min(q,1-q)=1-q$ (as
		$q\ge\tfrac1{\sqrt2}>\tfrac12$), so
		\[
		\int_{1/\sqrt2}^1q^2\,\Phi(q)\,dq
		\;=\;\frac{\gamma_c}{\sqrt2}\int_{1/\sqrt2}^1q^2(1-q)\,dq
		\;=\;\frac{\gamma_c}{\sqrt2}\,I_2 ,
		\]
		and \eqref{eq:handcore} gives
		\[
		J(a)-\frac{\sqrt2}{6}
		\;\ge\;\Bigl(\frac{\gamma_c}{\sqrt2}-\delta\Bigr)I_2 .
		\]
		Finally, $\frac{\gamma_c}{\sqrt2}>\delta$ is equivalent to
		$\gamma_c^2>2\delta^2$, i.e.\ to
		$\delta(2\gamma-\delta)>2\delta^2$; dividing by $\delta>0$, this is
		$2\gamma-\delta>2\delta$, i.e.\ exactly $\delta<\frac{2\gamma}3$,
		which was shown above. Hence $J(a)>\frac{\sqrt2}{6}$ in this case.
		
		This exhausts all cases, so $\bar\kappa\ge\frac{\sqrt2}{6}$.
		Together with
		$\bar\kappa\le\frac{\sqrt2}{6}$ (the pair computation displayed
		after \eqref{eq:kbar}) this gives
		$\bar\kappa=\frac{\sqrt2}{6}$.
	\end{proof}
	
	\section{Concluding remarks}\label{sec:conclusion}
	
	Theorem~\ref{thm:main} determines $c^*=\tfrac{\sqrt2}{6}$. By the
	theorem, the infimum in \eqref{eq:cstar} is attained, at $\Delta=2$, by
	the unit-weight directed triangle. One problem remains open. Our methods evaluate the
	integrated functional \eqref{eq:kbar} exactly, but leave open its
	pointwise analogue: determine, for fixed $q\in(0,1)$,
	\[
	\kappa(q)\;:=\;\inf_{\norm a_2=1}\E\Bigl|\sum\nolimits_ja_jB_j(q)\Bigr| .
	\]
	Two test vectors give upper bounds. The singleton $a=(1)$ gives
	$\E|B(q)|=q$, and the pair $a=(\tfrac1{\sqrt2},-\tfrac1{\sqrt2})$
	gives $\frac1{\sqrt2}\,\E|B_1(q)-B_2(q)|=\sqrt2\,q(1-q)$, as
	$B_1-B_2$ equals $\pm1$ with probability $2q(1-q)$ and $0$
	otherwise. Hence
	$\kappa(q)\le\min\bigl(q,\sqrt2\,q(1-q)\bigr)$, the singleton
	branch being the smaller one for $q\le1-\tfrac1{\sqrt2}$ and the
	pair branch beyond. We conjecture that equality holds for all
	$q\in(0,1)$. For $q=\tfrac12$ this follows from Szarek's
	inequality \cite{Sza76} (see also \cite{Haa81}): writing $B_j=\frac{1+\varepsilon_j}{2}$ with
	random signs $\varepsilon_j$ (i.e. $\Prob(\varepsilon_j=+1)=\Prob(\varepsilon_j=-1)=1/2$), we have
	$\E|\sum_ja_jB_j(\tfrac12)|=\tfrac12\,\E|s+\sum_ja_j\varepsilon_j|$ with $s=\sum_ja_j$, while a shift can only increase the first absolute moment of a symmetric random variable, and
	$\inf_{\norm a_2=1}\E|\sum_ja_j\varepsilon_j|=\frac1{\sqrt2}$
	\cite{Sza76}, whence $\kappa(\tfrac12)=\frac{\sqrt2}{4}$, matching the pair branch.
	
	Whatever the answer, the pointwise and integrated problems are
	quantitatively separated: integrating the upper bound,
	\[
	\int_0^1\kappa(q)\,dq\;\le\;
	\int_0^1\min\bigl(q,\sqrt2\,q(1-q)\bigr)\,dq
	\;=\;\frac{7-3\sqrt2}{12}
	\;<\;\frac{\sqrt2}{6}\;=\;\bar\kappa,
	\]
	Thus the infimum of the integral strictly exceeds the integral of the pointwise infimum: no single unit vector is optimal at every $q$ simultaneously --- the pointwise minimizer must change with $q$ --- whereas the integrated problem is solved by the pair alone. This is why our evaluation of $\bar\kappa$ could not proceed from a pointwise bound, and required the global analysis of Sections~\ref{sec:kappa} and~\ref{sec:residual}.

\end{document}